\newtheorem{thm}{Theorem}
\newtheorem{lem}{Lemma}
\newtheorem{cor}{Corollary}
\newtheorem{defi}{Definition}
\newtheorem{rem}{Remark}
\newtheorem{rthm}{Theorem}
\newtheorem{rlem}{Lemma}
\newtheorem{rcor}{Corollary}
\newcommand{\sR}{\mathbb{R}}
\newcommand{\cN}{\mathcal{N}}
\title{\textbf{Empirical Process of Multivariate Gaussian under General Dependence}}
\author{Jikai Hou\footnotemark[1] \footnotemark[2]\\
\texttt{houjikai@pku.edu.cn}}
\date{October 2019; Revised July 2020}
\begin{document}
\maketitle

\begin{abstract}
This paper explores certain kinds of empirical process with respect to the components of multivariate Gaussian. We put forward some finite sample bounds which hold for multivariate Gaussian under general dependence. We give necessary and sufficient condition for the convergence in probability of the random variable sequence $\displaystyle\left\{\sup_t\vert\widehat{F}_n(t)-\mathbf{E}\widehat{F}_n(t)\vert\right\}_{n\in \mathbb{N}}$, where $\widehat{F}_n(t)$ is the empirical distribution. Also, we find a similar sufficient condition for almost surely convergence.
\end{abstract}
\section{Introduction}

Empirical process is a fundamental topic in probability theory. Application of empirical process theory arises in many related fields, such as non-parametric statistics and statistical learning theory \cite{Mojirsheibani2007Nonparametric,Vaart2013Weak,Barrio2007Lectures,Dudley1967The,wainwright2019high}. While vigorous development of empirical process based on independent and identically distributed (i.i.d) random variables has been achieved by a large number of previous work \cite{Vaart2013Weak,Barrio2007Lectures}, few theoretical result has been provided when the independent condition is relaxed. Several work \cite{dedecker2007empirical,wu2004empirical,Dehling2002Empirical} studied the property of empirical process under weak dependence.\\

Different from i.i.d case, general dependence structure can be very complicated. Therefore, several studies \cite{Taqqu1977Law,arcones1994limit,delattre2016empirical,Dehling1989The,csorgo1996empirical,buchsteiner2018function} turned to some specific but common joint probability distribution structures, for example, multivariate Gaussian. In most of these studies, Hermite polynomials were adopted to deal with Gaussian random variables. We denote by $\phi(x)$ the density function of a standard Gaussian variable and by $\mu$ the standard Gaussian measure, then the Hermite polynomials $H_k(x)$ can be defined as
\begin{equation*}
    \phi^{(k)}(x)=(-1)^kH_k(x)\phi(x).
\end{equation*}
It is known that the normalized Hermite polynomials $\displaystyle\left\{\frac{H_k(x)}{\sqrt{k!}},k\geq 0\right\}$ form a Hilbert basis of the space $L^2(\sR, \mu)$, which is the space of square-integrable functions with respect to Gaussian measure. We let $\displaystyle h_k(x)=\frac{H_k(x)}{\sqrt{k!}}$. What's more, Hermite polynomials have another good property when it comes to bivariate Gaussian distribution. We denote by $(U,V)$ a centered bivariate Gaussian vector which obeys
\begin{equation*}
    (U,V)\sim \cN\left(0,\left(\begin{matrix}
    1 & \sigma\\
    \sigma & 1
    \end{matrix}\right)\right),
\end{equation*}
then we have \cite{Taqqu1977Law,delattre2016empirical}
\begin{equation}\label{cov}
    \mathbf{E}h_k(U)h_{k'}(V)=\sigma^k \delta_{k,k'},
\end{equation}
where $\delta$ is the Kronecker delta. This property offers us an opportunity to accurately interpret the dependence of multivariate Gaussian.\\

\newpage
In this paper, we adopt the chaining method \cite{Vaart2013Weak,Barrio2007Lectures,Dudley1967The,wainwright2019high} to build finite sample bounds for the empirical process of multivariate Gaussian. Since the index set of empirical distribution $\mathcal{C}=\{(-\infty,t]:t\in \sR\}$ is parameterized by a one-dimensional parameter $t$, the chaining method with $L^2$ norm is sufficient to yield a meaningful bound. Compared to metric space equipped with sub-Gaussian norm $\Vert \cdot\Vert_{\psi_2}$, metric space with $L^2$ norm $\Vert \cdot\Vert_2$ has more delicate algebra structure. We decompose the empirical process into the Hilbert basis $\{h_k(x),k\geq 0\}$, then the whole chaining method applied to the empirical process can be viewed as the chaining method applied to each subspace, which are orthogonal to each other. For the next step, the metric sum in the chaining method can be bounded by the quadratic variation of the projection on each subspace in some sense. Finally, we utilize the isometrically isomorph property of the Hilbert space $L^2(\sR,\mu)$ to calculate the aggregation of the quadratic variation on each subspace.\\

This paper is organized as follows. In Section \ref{fsb}, we present the meta  result Lemma \ref{lemma1}, which details the technique described above. For some technical reason, Lemma \ref{lemma1} deals with an empirical process $\widehat{Q}_n(t)$, which can be viewed as a smooth modification of the empirical distribution $\widehat{F}_n(t)$. Next in Theorem \ref{thm2}, we introduce a novel technique to build finite sample bound for the empirical distribution $\widehat{F}_n(t)$ by the result of $\widehat{Q}_n(t)$. Based on the results in Section \ref{fsb}, we present the main theorems in Section \ref{main} in advance. Theorem \ref{thm1} and Corollary \ref{cor1} about convergence in probability are direct corollaries of Theorem \ref{thm2}. Combine Theorem \ref{thm1} with \cite[Theorem 1]{Azriel2015The}, we have Corollary \ref{corequ} which states the condition given by Theorem \ref{thm1} is necessary and sufficient. After a more detailed discussion of the empirical distribution function, we also build Lemma \ref{lemas} and Theorem \ref{thmas} about almost surely convergence.\\

\noindent\textbf{Notation:} We let $\phi(x), \Phi(x),\mu$ be the density function of standard Gaussian, the cumulative function of standard Gaussian, and the standard Gaussian measure respectively. Given a measure $\nu$ on space $\mathbb{X}$, we denote $L^2(\mathbb{X},\nu)$ the space of square-integrable functions with respect to measure $\nu$. We let $\overset{L^2}{=}$ be the equality holds in the sense of certain $L^2$ space, and the specific $L^2$ space is clear in the context. $\lfloor \cdot \rfloor $ denotes the floor function. $\textbf{1}(A)$ denotes the indicator function with respect to event $A$.

\section{Main Results}\label{main}
In some realistic settings, we would like to ask how the elements in a stochastic process rather than an i.i.d sequence distribute in the long run. Some work has developed theories with the help of properties of certain dependence structures, including Markov property \cite{Kontorovich2012Uniform} and conditions regarding martingale difference \cite{dedecker2007empirical,wu2004empirical}.\\

In this section, we present our main results which show that jointly Gaussian is also a fundamental property. If the stochastic process is a Gaussian process, even under very general dependence structures, the empirical distribution regarding the elements of the process will converge. To define the empirical distribution concisely, we introduce the notion of standardized Gaussian process first.

\begin{defi}
A stochastic process $\{ X_k,k\in \mathbb{N}\}$ is called \textbf{Gaussian process} if and only if for every finite set of indices $\{k_1,k_2,\dots,k_t\}, t\geq 1$, the joint distribution of $(X_{k_1},X_{k_1},\dots,X_{k_t})$ is multivariate Gaussian. Furthermore, if $X_k\sim\cN(0,1)$ holds for every $k\in \mathbb{N}$, the process is called \textbf{standardized Gaussian process}.
\end{defi}

This paper focuses on the following empirical process which is defined by the components of a standardized Gaussian process
\begin{equation*}
    \widehat{F}_n(t)\overset{\triangle}{=}\frac{1}{n}\sum_{i=1}^n \textbf{1}(\Phi(X_i)\leq t),
\end{equation*}
where $\Phi(\cdot)$ is the cumulative distribution function of standard Gaussian. And, we define $\Delta(n)=2\sum\limits_{i< j\leq n}\vert Cov(X_i,X_j)\vert$ a dependence measure for the Gaussian process. Now we are ready to present the main theorem.

\begin{thm}\label{thm1}
Consider a standardized Gaussian process $\{ X_k,k\in \mathbb{N}\}$. Assume that the dependence measure satisfies

\begin{equation*}
    \lim_{n\rightarrow \infty} \frac{\Delta(n)}{n^2}=0.
\end{equation*}
Then we have 

\begin{equation*}
    \sup_t\vert\widehat{F}_n(t)-\mathbf{E}\widehat{F}_n(t) \vert\xrightarrow{P} 0.
\end{equation*}
\end{thm}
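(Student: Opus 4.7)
The plan is to exploit the orthogonality identity (\ref{cov}) to expand the empirical process into uncorrelated Hermite chaos components, control each component in $L^2$ using the hypothesis $\Delta(n)/n^2\to 0$, and then promote the resulting pointwise $L^2$ estimate to a uniform one by using the monotonicity of $\widehat{F}_n$ together with the identity $\mathbf{E}\widehat{F}_n(t)=t$.

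Concretely, since each $X_i\sim\cN(0,1)$, the variable $\Phi(X_i)$ is uniform on $[0,1]$, so $\mathbf{E}\widehat{F}_n(t)=t$. Writing $u=\Phi^{-1}(t)$, the indicator $x\mapsto\mathbf{1}(x\le u)-\Phi(u)$ lies in $L^2(\sR,\mu)$ with zero mean and admits a Hermite expansion $\sum_{k\ge 1}c_k(u)\,h_k(x)$, whose coefficients $c_k(u)$ are obtainable in closed form from $H_k\phi=(-1)^k\phi^{(k)}$ by integration by parts. Averaging over $i$ gives
\begin{equation*}
\widehat{F}_n(t)-t=\sum_{k=1}^\infty c_k(u)\,S_n^{(k)},\qquad S_n^{(k)}:=\frac{1}{n}\sum_{i=1}^n h_k(X_i).
\end{equation*}
By (\ref{cov}), chaos components of different orders are uncorrelated, and within the $k$-th order $\mathbf{E}[S_n^{(k)}]^2=n^{-2}\sum_{i,j}Cov(X_i,X_j)^k\le 1/n+\Delta(n)/n^2$, where I have used $|Cov(X_i,X_j)|\le 1$ so that $|Cov(X_i,X_j)|^k\le|Cov(X_i,X_j)|$. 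Since $\sum_{k\ge 1}c_k(u)^2=t(1-t)\le 1/4$ by Parseval, this yields the pointwise $L^2$ bound
\begin{equation*}
\mathbf{E}[\widehat{F}_n(t)-t]^2\;\le\;\frac{1}{4}\Bigl(\frac{1}{n}+\frac{\Delta(n)}{n^2}\Bigr).
\end{equation*}

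For the supremum, the monotonicity of $\widehat{F}_n$ makes the discretization essentially free. Choose a uniform grid $t_j=j/N$ for $j=0,\ldots,N$. Since both $\widehat{F}_n$ and $t\mapsto t$ are non-decreasing, for $t\in[t_j,t_{j+1}]$ one has $\widehat{F}_n(t_j)-t_{j+1}\le\widehat{F}_n(t)-t\le\widehat{F}_n(t_{j+1})-t_j$, whence
\begin{equation*}
\sup_{t\in[0,1]}|\widehat{F}_n(t)-t|\;\le\;\max_{0\le j\le N}|\widehat{F}_n(t_j)-t_j|+\frac{1}{N}.
\end{equation*}
A Chebyshev-plus-union-bound estimate gives $P(\max_j|\widehat{F}_n(t_j)-t_j|>\epsilon)\le\frac{N+1}{4\epsilon^2}(1/n+\Delta(n)/n^2)$. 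Because $\Delta(n)/n^2\to 0$, one can pick $N=N(n)\to\infty$ slowly enough that also $N\cdot(1/n+\Delta(n)/n^2)\to 0$, and convergence in probability follows.

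The main technical point, rather than a genuine obstacle, is the \emph{uniformity in $k$} of the second-moment bound $\mathbf{E}[S_n^{(k)}]^2\le 1/n+\Delta(n)/n^2$, which relies on the elementary inequality $|Cov(X_i,X_j)|^k\le|Cov(X_i,X_j)|$ to tame the higher-order chaos and thereby allow Parseval to handle the sum in $k$ at no further cost. Once this uniform bound is in place, the deterministic monotonicity of $\widehat{F}_n$ together with the identity $\mathbf{E}\widehat{F}_n(t)=t$ trivializes the chaining step, in contrast with the smooth surrogate $\widehat{Q}_n(t)$ of Lemma \ref{lemma1}, where genuine chaining is required; this is exactly why Theorem \ref{thm1} can be obtained as a direct corollary of Theorem \ref{thm2}.
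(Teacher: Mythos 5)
Your proof is correct, but it takes a genuinely different route from the paper's. The paper deduces Theorem \ref{thm1} from the in-expectation bound of Theorem \ref{thm2}, which in turn rests on Lemma \ref{lemma1}: a smoothed surrogate $\widehat{Q}_n$, an $L^2$ chaining argument over dyadic nets, and a squeeze of the indicator between translates $\widehat{Q}_n(t\pm\epsilon)$, finished off by Markov's inequality. You instead establish only the pointwise second-moment bound $\mathbf{E}[\widehat{F}_n(t)-t]^2\le\tfrac14\bigl(1/n+\Delta(n)/n^2\bigr)$ via the same Hermite-chaos orthogonality from Eq.~\eqref{cov} plus Parseval --- this is essentially the content of the result of \cite{Azriel2015The} that the paper quotes separately in Corollary \ref{corequ} --- and then you pass to the supremum by exploiting the monotonicity of $\widehat{F}_n$ and of $t\mapsto\mathbf{E}\widehat{F}_n(t)=t$, which reduces $\sup_t$ to a maximum over $N+1$ grid points at the cost of an additive $1/N$; a union bound with $N(n)\to\infty$ slowly then gives convergence in probability. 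All the individual steps check out (the uniform-in-$k$ bound on $\mathbf{E}[S_n^{(k)}]^2$, the identity $\sum_{k\ge1}c_k(u)^2=t(1-t)$, and the grid reduction are each correct, and the sup over $t\notin[0,1]$ is trivially handled). Your route is more elementary --- no smoothing, no chaining --- because it fully uses the one-dimensional monotone structure of the index class; what it does not deliver as stated is the explicit finite-sample bound on $\mathbf{E}\sup_t\vert\widehat{F}_n(t)-\mathbf{E}\widehat{F}_n(t)\vert$ of Theorem \ref{thm2}, which the paper reuses for the almost-sure results (Lemma \ref{lemas} and Theorem \ref{thmas}); with a little extra work (bounding $\mathbf{E}\max_j\vert\widehat{F}_n(t_j)-t_j\vert$ by $\tfrac12\sqrt{(N+1)\delta_n}$ with $\delta_n=1/n+\Delta(n)/n^2$ and optimizing $N\asymp\delta_n^{-1/3}$) your argument would in fact recover an in-expectation bound of the same cube-root order. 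One small correction: your closing remark that this ``is exactly why Theorem \ref{thm1} can be obtained as a direct corollary of Theorem \ref{thm2}'' mischaracterizes your own argument, which bypasses Theorem \ref{thm2} entirely.
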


Standard Gaussian random variables (assumed joint normality) which satisfy $\Delta(n)=o(n^2)$ are also called weakly dependent normal variables \cite{Fan2010EstimatingFD}. Combine Theorem \ref{thm1} with \cite[Theorem 1]{Azriel2015The}, we have the following corollary which implies that \textbf{uniformly pointwise} convergence in probability is equivalent to \textbf{uniformly} convergence in probability for the empirical distribution of standardized Gaussian process.
\begin{cor}\label{corequ}
The following statements are equivalent.
\begin{itemize}
    \item $\{ X_k,k\in \mathbb{N}\}$ are weakly dependent normal variables;
    \item $\lim\limits_{n\rightarrow\infty}\sup\limits_tP(\vert\widehat{F}_n(t)-\mathbf{E}\widehat{F}_n(t) \vert>\epsilon)=0,\quad \forall \epsilon>0$;
    \item $\lim\limits_{n\rightarrow\infty}P(\sup\limits_t\vert\widehat{F}_n(t)-\mathbf{E}\widehat{F}_n(t) \vert>\epsilon)=0,\quad \forall \epsilon>0$.
\end{itemize}
\end{cor}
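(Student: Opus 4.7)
The plan is to establish the three-way equivalence by closing the cycle $(1)\Rightarrow(3)\Rightarrow(2)\Rightarrow(1)$, with only the last arrow requiring input beyond what has been developed in this paper. The first implication $(1)\Rightarrow(3)$ is exactly the content of Theorem \ref{thm1}: the assumption $\Delta(n)/n^2\to 0$ is precisely the weak-dependence hypothesis of that theorem, and its conclusion is uniform convergence in probability of $\widehat{F}_n-\mathbf{E}\widehat{F}_n$ over $t$, which is statement $(3)$.

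The implication $(3)\Rightarrow(2)$ is immediate and soft. For every fixed $t$ one has the pointwise bound
\[
P\bigl(|\widehat{F}_n(t)-\mathbf{E}\widehat{F}_n(t)|>\epsilon\bigr)\leq P\bigl(\sup_s |\widehat{F}_n(s)-\mathbf{E}\widehat{F}_n(s)|>\epsilon\bigr),
\]
so taking the supremum over $t$ on the left-hand side preserves the inequality, and letting $n\to\infty$ on the right-hand side (which is what $(3)$ provides) yields $(2)$.

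For the remaining direction $(2)\Rightarrow(1)$ I would invoke \cite[Theorem 1]{Azriel2015The}, which characterizes uniformly pointwise convergence in probability of the empirical distribution of a standardized Gaussian sequence and identifies the necessary and sufficient condition as $\Delta(n)=o(n^2)$. Once the setups are matched (centered unit-variance jointly Gaussian variables, with the same covariance-sum dependence measure), this directly produces $(1)$ from $(2)$.

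The main obstacle is essentially notational rather than analytic: one must verify that the hypothesis in \cite[Theorem 1]{Azriel2015The} concerning ``uniformly pointwise convergence'' is literally our statement $(2)$, and that the dependence functional there coincides with our $\Delta(n)=2\sum_{i<j\leq n}|\mathrm{Cov}(X_i,X_j)|$. Granting this alignment, the corollary is nothing more than Theorem \ref{thm1}, the trivial step $(3)\Rightarrow(2)$, and an appeal to the cited characterization to close the cycle.
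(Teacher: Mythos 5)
Your cycle $(1)\Rightarrow(3)\Rightarrow(2)\Rightarrow(1)$ is exactly the logical skeleton of the paper's own proof, and the first two arrows are handled identically: Theorem \ref{thm1} supplies $(1)\Rightarrow(3)$, and the pointwise-versus-uniform comparison gives $(3)\Rightarrow(2)$ for free. The one place where your argument is looser than the paper's is the closing arrow $(2)\Rightarrow(1)$. The result of \cite{Azriel2015The} as quoted in the paper is a characterization in terms of \emph{mean-square} convergence,
\[
\lim_{n\rightarrow\infty}\frac{\Delta(n)}{n^2}=0\Longleftrightarrow \lim_{n\rightarrow\infty}\sup_t\mathbf{E}\vert\widehat{F}_n(t)-\mathbf{E}\widehat{F}_n(t)\vert^2=0,
\]
not directly a characterization of your statement $(2)$. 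The paper therefore inserts a short bridging equivalence between the $L^2$ statement and statement $(2)$: Chebyshev gives $\sup_t P(\vert\widehat{F}_n(t)-\mathbf{E}\widehat{F}_n(t)\vert>\epsilon)\leq \epsilon^{-2}\sup_t\mathbf{E}\vert\widehat{F}_n(t)-\mathbf{E}\widehat{F}_n(t)\vert^2$ in one direction, and the boundedness $\vert\widehat{F}_n(t)-\mathbf{E}\widehat{F}_n(t)\vert\leq 1$ gives $\sup_t\mathbf{E}\vert\widehat{F}_n(t)-\mathbf{E}\widehat{F}_n(t)\vert^2\leq \epsilon^2+\sup_t P(\vert\widehat{F}_n(t)-\mathbf{E}\widehat{F}_n(t)\vert>\epsilon)$ in the other, after which $\epsilon$ is sent to $0$. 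You flagged the matching of statement $(2)$ to the cited hypothesis as ``essentially notational,'' but it is in fact a short, genuine analytic step (albeit an easy one); once it is supplied, your proof coincides with the paper's.
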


Note that for a sequence of completely identical standard Gaussian random variables, $\Delta(n)=O(n^2)$. Theorem \ref{thm1} implies that as long as the strong correlation condition is slightly relaxed, say, $\Delta(n)=o(n^2)$, the empirical distribution of the Gaussian random variable sequence will converge. This condition is met for a wide range of Gaussian process in realistic settings. For example, the Gaussian process whose covariance vanishes with the time shift. We summarize this result below.

\begin{cor}\label{cor1}
Consider a standardized Gaussian process $\{ X_k,k\in \mathbb{N}\}$. Suppose the covariance vanish with the time shift, that is to say, there exists a vanishing function $r(\cdot)$ with $\displaystyle\lim_{x\rightarrow \infty}r(x)=0$ and satisfying

\begin{equation*}
    \vert\mathbf{E}X_iX_j\vert \leq r(\vert i-j\vert).
\end{equation*}
Then we have 

\begin{equation*}
    \sup_t\vert\widehat{F}_n(t)-\mathbf{E}\widehat{F}_n(t) \vert\xrightarrow{P} 0.
\end{equation*}
\end{cor}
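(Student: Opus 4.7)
The plan is to reduce Corollary \ref{cor1} to Theorem \ref{thm1} by verifying that the covariance-decay hypothesis forces the dependence measure to satisfy $\Delta(n)/n^2 \to 0$. Everything else is already supplied by Theorem \ref{thm1}, so this is essentially a Ces\`aro-type computation.

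First I would rewrite the double sum defining $\Delta(n)$ by grouping terms according to the time lag $d = j - i$. Since there are exactly $n-d$ pairs $(i,j)$ with $1 \leq i < j \leq n$ and $j - i = d$, the hypothesis $|\mathbf{E}X_iX_j| \leq r(|i-j|)$ yields
\begin{equation*}
    \Delta(n) \;=\; 2\sum_{i<j\leq n}|\mathrm{Cov}(X_i,X_j)| \;\leq\; 2\sum_{d=1}^{n-1}(n-d)\,r(d) \;\leq\; 2n\sum_{d=1}^{n-1} r(d).
\end{equation*}
Dividing by $n^2$ gives the clean bound $\Delta(n)/n^2 \leq (2/n)\sum_{d=1}^{n-1} r(d)$.

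Next I would apply the Ces\`aro mean theorem. Since $r(d) \to 0$ as $d \to \infty$ by hypothesis (and we may harmlessly assume $r$ is bounded, e.g.\ replace $r$ by $\min(r,1)$ since $|\mathrm{Cov}(X_i,X_j)| \leq 1$), the arithmetic mean $\frac{1}{n}\sum_{d=1}^{n-1} r(d)$ also tends to $0$. Consequently $\Delta(n)/n^2 \to 0$, so the hypothesis of Theorem \ref{thm1} is satisfied and the conclusion $\sup_t|\widehat F_n(t) - \mathbf{E}\widehat F_n(t)| \xrightarrow{P} 0$ follows immediately.

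There is no real obstacle here; this corollary is a direct packaging of Theorem \ref{thm1} under a more familiar sufficient condition, and the only minor care-point is that $r$ is not assumed monotone or nonnegative beyond being a majorant, which is why the Ces\`aro argument (rather than a stronger monotone/summable comparison) is the right tool.
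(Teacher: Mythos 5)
Your proposal is correct and is in substance the same as the paper's own proof: both reduce Corollary \ref{cor1} to Theorem \ref{thm1} by verifying $\Delta(n)=o(n^2)$ from the vanishing of $r$. The only cosmetic difference is that you group the pairs by lag and invoke the Ces\`aro mean theorem (noting $r(d)\ge 0$ and $r(d)\to 0$), whereas the paper carries out the equivalent small-lag/large-lag $\varepsilon$-splitting of the double sum by hand.
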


In addition to the results of convergence in probability, we are also able to consider almost surely convergence. We have the lemma below.

\begin{lem}\label{lemas}
Consider a standardized Gaussian process $\{ X_k,k\in \mathbb{N}\}$. Assume that the dependence measure satisfies

\begin{equation*}
    \sum_{i=1}^\infty \sqrt[3]{\frac{\Delta(\lfloor \gamma^i \rfloor)}{\lfloor \gamma^i \rfloor^2}}<+\infty,\quad\forall \gamma>1.
\end{equation*}
Then we have 

\begin{equation*}
    \sup_t\vert\widehat{F}_n(t)-\mathbf{E}\widehat{F}_n(t) \vert\xrightarrow{a.s.} 0.
\end{equation*}

\end{lem}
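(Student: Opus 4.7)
My plan is to combine the finite-sample tail bound provided by Theorem \ref{thm2} with the classical subsequence-plus-monotonicity argument familiar from Glivenko--Cantelli-type theorems. First I would extract from Theorem \ref{thm2} a polynomial tail bound of the shape
$$P\bigl(\sup_t|\widehat{F}_n(t)-\mathbf{E}\widehat{F}_n(t)|>\epsilon\bigr)\;\le\;C(\epsilon)\,\sqrt[3]{\Delta(n)/n^2},$$
presumably obtained by applying Markov's inequality to whatever $L^1$ bound on the supremum emerges from the $L^2$-chaining inside Lemma \ref{lemma1}. The cube root in the hypothesis of Lemma \ref{lemas} is tailored precisely to this exponent.

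Fix an arbitrary $\gamma>1$ and set $n_i=\lfloor \gamma^i\rfloor$. Applying the above tail bound at $n=n_i$ and summing, the hypothesis together with the Borel--Cantelli lemma immediately gives $\sup_t|\widehat{F}_{n_i}(t)-\mathbf{E}\widehat{F}_{n_i}(t)|\to 0$ almost surely along the subsequence $\{n_i\}$. I would then fill the gap between consecutive subsequence indices by a monotonicity sandwich: since $X_i\sim\cN(0,1)$ implies $\Phi(X_i)\sim\mathrm{Uniform}[0,1]$, the mean $\mathbf{E}\widehat{F}_n(t)=F(t):=\max(0,\min(1,t))$ does not depend on $n$, and $n\widehat{F}_n(t)=\sum_{i\le n}\mathbf{1}(\Phi(X_i)\le t)$ is non-decreasing in $n$. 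Therefore, for $n_i\le n\le n_{i+1}$,
$$\frac{n_i}{n_{i+1}}\widehat{F}_{n_i}(t)\;\le\;\widehat{F}_n(t)\;\le\;\frac{n_{i+1}}{n_i}\widehat{F}_{n_{i+1}}(t).$$
Subtracting $F(t)$, using $0\le F\le 1$, and noting $n_{i+1}/n_i\to \gamma$ yields $\limsup_n\sup_t|\widehat{F}_n(t)-F(t)|\le \gamma-1$ almost surely. Intersecting these a.s.\ events over a countable sequence $\gamma_k\downarrow 1$ produces the claimed almost-sure convergence.

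The main obstacle is the first step: translating Theorem \ref{thm2} into a tail bound whose decay in $\Delta(n)/n^2$ has exactly the cube-root speed that the hypothesis is calibrated to. If Theorem \ref{thm2} already furnishes an expectation bound of the form $\mathbf{E}\sup_t|\widehat{F}_n-\mathbf{E}\widehat{F}_n|\lesssim (\Delta(n)/n^2)^{1/3}$ then Markov's inequality finishes this step at once; otherwise one has to revisit the chaining inequality of Lemma \ref{lemma1} and extract the appropriate moment. The remaining interpolation is routine, and the ``for all $\gamma>1$'' form of the hypothesis is there precisely so that the discretization error $\gamma-1$ can be driven to zero.
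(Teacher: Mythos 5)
Your proof is correct and follows essentially the same route as the paper: the cube-root expectation bound of Theorem \ref{thm2} plus Markov and Borel--Cantelli along the geometric subsequence $\lfloor\gamma^i\rfloor$, followed by interpolation across each block with error of order $\gamma-1$ and then $\gamma\downarrow 1$ (your hedged first step is indeed settled by Theorem \ref{thm2}, whose bound $16\sqrt[3]{(n+\Delta)/n^2}$ has exactly the required exponent, the extra $n/n^2$ term contributing only a convergent geometric series along the subsequence). The one point of divergence is the interpolation mechanism: the paper telescopes the deterministic fluctuation bound $\vert D_n-D_{n+1}\vert\le\frac{1}{n+1}$ for $D_n=\sup_t\vert\widehat{F}_n(t)-\mathbf{E}\widehat{F}_n(t)\vert$, while you use the classical Glivenko--Cantelli sandwich based on monotonicity of $n\widehat{F}_n(t)$ in $n$ and the fact that $\mathbf{E}\widehat{F}_n(t)$ is the fixed uniform c.d.f.; both yield the same $O(\gamma-1)$ block error.
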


Note that Corollary \ref{corequ} actually implies the condition for convergence in probability in Theorem \ref{thm1} is necessary and sufficient, Lemma \ref{lemas} only offers a sufficient condition for almost surely convergence. However, Lemma \ref{lemas} gives the following theorem which states $\Delta(n)=O\left(n^2(\ln n)^{-3-\delta}\right)$, which only has a small gap with $\Delta(n)=o(n^2)$ in Theorem \ref{thm1}, is sufficient to ensure almost surely convergence.

\begin{thm}\label{thmas}
Consider a standardized Gaussian process $\{ X_k,k\in \mathbb{N}\}$. Assume that the dependence measure satisfies

\begin{equation*}
    \Delta(n)=O\left(n^2(\ln n)^{-3-\delta}\right)
\end{equation*}
for some $\delta>0$. Then we have 

\begin{equation*}
    \sup_t\vert\widehat{F}_n(t)-\mathbf{E}\widehat{F}_n(t) \vert\xrightarrow{a.s.} 0.
\end{equation*}

\end{thm}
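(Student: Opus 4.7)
The plan is to derive Theorem \ref{thmas} as an essentially computational consequence of Lemma \ref{lemas}. Since Lemma \ref{lemas} already provides a sufficient condition for almost sure convergence in terms of a series involving the dependence measure evaluated along the geometric subsequence $n_i = \lfloor \gamma^i \rfloor$, it suffices to check that the polylogarithmic bound $\Delta(n) = O(n^2 (\ln n)^{-3-\delta})$ makes that series converge for every $\gamma > 1$.

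First I would fix an arbitrary $\gamma > 1$ and set $n_i = \lfloor \gamma^i \rfloor$. For $i$ large enough that $n_i \geq 2$, one has $\ln n_i = i \ln \gamma + O(1)$, so $\ln n_i \geq \tfrac{1}{2} i \ln \gamma$ eventually. Plugging into the hypothesis gives
\begin{equation*}
\frac{\Delta(n_i)}{n_i^2} \leq C (\ln n_i)^{-3-\delta} \leq C' (\ln \gamma)^{-3-\delta} \, i^{-3-\delta},
\end{equation*}
for constants $C, C'$ that may depend on $\gamma$ but not on $i$. Taking cube roots, the $i$-th summand in the series of Lemma \ref{lemas} is bounded by a $\gamma$-dependent constant times $i^{-1-\delta/3}$.

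Since $\delta/3 > 0$, the series $\sum_i i^{-1-\delta/3}$ converges, and hence $\sum_i \sqrt[3]{\Delta(n_i)/n_i^2} < +\infty$ for every $\gamma > 1$. All hypotheses of Lemma \ref{lemas} are therefore satisfied, and the almost sure conclusion $\sup_t |\widehat{F}_n(t) - \mathbf{E}\widehat{F}_n(t)| \to 0$ follows immediately.

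There is no real obstacle here: the whole content of Theorem \ref{thmas} is packaged into Lemma \ref{lemas}, and the theorem merely records that the most natural polylogarithmic rate is already strong enough to trigger it. The only point worth noting is the bookkeeping with the floor function and the $\gamma$-dependence of the constants, which is handled by the eventual lower bound $\ln n_i \geq \tfrac{1}{2} i \ln \gamma$ used above.
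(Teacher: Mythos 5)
Your proposal is correct and follows essentially the same route as the paper: both verify the hypothesis of Lemma \ref{lemas} by noting that along $n_i=\lfloor\gamma^i\rfloor$ the bound $\Delta(n_i)/n_i^2 = O((\ln n_i)^{-3-\delta})$ yields summands of order $i^{-1-\delta/3}$, a convergent series for every $\gamma>1$. Your explicit use of the eventual lower bound $\ln n_i \geq \tfrac{1}{2} i\ln\gamma$ is, if anything, slightly more careful than the paper's $O(\cdot)$ bookkeeping around the floor function.
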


Here we provide some examples where Theorem \ref{thm1}, Corollary \ref{cor1} and Theorem \ref{thmas} can be applied.\\

\noindent\textbf{Long-range dependence process} \; A standardized Gaussian process $\{ X_k,k\in \mathbb{N}\}$ is called long-range dependence process \cite{Taqqu1977Law} if 
\begin{equation*}
    \mathbf{E} X_iX_j=r(\vert i-j\vert),
\end{equation*}
where $r(0)=1, r(k)=k^{-D}L(k)$, $0<D<1$ and $L(\cdot)$ slowly varying at infinity. Then we have
\begin{equation*}
    \Delta(n)=o(n^{2-\frac{D}{2}}).
\end{equation*}

\noindent\textbf{Ornstein-Uhlenbeck process} \; We consider an Ornstein-Uhlenbeck process \cite{Durell1942The} defined by the following stochastic differential equation

\begin{equation*}
    dY_t=-\alpha Y_t dt+\sqrt{2\alpha}dW_t,\; Y_0\sim \mathcal{N}(0,1),
\end{equation*}
where $\alpha>0$ is a parameter and $W_t$ denotes the Wiener process. If we take $X_k=Y_k$ for $k\in \mathbb{N}$, then $\{ X_k,k\in \mathbb{N}\}$ is a standardized Gaussian process. The covariance function of $\{ X_k,k\in \mathbb{N}\}$ can be written as
\begin{equation*}
    \mathbf{E} X_iX_j=e^{-\alpha \vert i-j\vert}.
\end{equation*}
In this case, we have

\begin{equation*}
    \Delta(n)=O(n).
\end{equation*}

\section{Finite Sample Bounds for Multivariate Gaussian}\label{fsb}

In this section, we explain and illustrate in details the technique described in the introduction section. In Lemma \ref{lemma1}, we turn to consider an empirical process $\widehat{Q}_n(t)$, which can be viewed as a smooth version of empirical distribution $\widehat{F}_n(t)$. The smooth modification works in two aspects. First and foremost, the smoothness can ensure the sum of quadratic variation in different subspaces of the Hilbert basis to be finite. Secondly, continuity of the path saves us unnecessary trouble to consider limitation. 

\begin{lem}\label{lemma1}
Consider $X=(X_1,X_2,\dots,X_n)\sim \mathcal{N}(0,C)$ is a multivariate Gaussian random vector with covariance matrix $C$. Every element of $X$ has unit variance, that is to say, $C_{ii}=1$. Suppose $\ell$ is a continuously differentiable function with first order derivative supported on $[-\frac{1}{2},\frac{1}{2}]$. We define the following empirical process
\begin{equation*}
    \widehat{Q}_n(t)=\frac{1}{n}\sum_{i=1}^n \ell(t-\Phi(X_i)),
\end{equation*}
where $\Phi(\cdot)$ is the cumulative distribution function of standard Gaussian. Then we have

\begin{equation*}
    \mathbf{E}\sup_t\vert\widehat{Q}_n(t)-\mathbf{E}\widehat{Q}_n(t) \vert\leq (\sqrt{6}+\sqrt{3})D(\ell)\sqrt{\frac{n+\Delta}{n^2}},
\end{equation*}
where $\Delta=\sum\limits_{i\neq j}\vert C_{ij}\vert$ and 
\begin{equation*}
    D(\ell)=\sqrt{\int_{-1}^2 \left[\int_0^1 (\ell'(t-y))^2 dy-\left(\int_0^1 \ell'(t-y)dy\right)^2\right] dt}
\end{equation*}
is a functional which only depends on $\ell$.
\end{lem}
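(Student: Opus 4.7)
The plan is to exploit the one-dimensional nature of the index $t$ together with the Hermite orthogonality relation~\eqref{cov} to reduce the supremum to a single variance integral. Since $\ell'$ is supported on $[-\tfrac{1}{2},\tfrac{1}{2}]$, $\ell$ is constant on each of $(-\infty,-\tfrac{1}{2}]$ and $[\tfrac{1}{2},\infty)$, so $\widehat{Q}_n(-\tfrac{1}{2})$ is deterministic and $\widehat{Q}_n'(s)$ vanishes outside $[-\tfrac{1}{2},\tfrac{3}{2}]$. Writing $Z(s):=\widehat{Q}_n'(s)-\mathbf{E}\widehat{Q}_n'(s)$ with $\widehat{Q}_n'(s)=\tfrac{1}{n}\sum_i\ell'(s-\Phi(X_i))$, the fundamental theorem of calculus therefore yields
\[
\sup_t\bigl|\widehat{Q}_n(t)-\mathbf{E}\widehat{Q}_n(t)\bigr|\;\le\;\int_{-1}^{2}|Z(s)|\,ds,
\]
after which I would pull the expectation inside and apply $\mathbf{E}|Z(s)|\le \sqrt{\mathrm{Var}\,Z(s)}$ pointwise.

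The crux is the pointwise variance. For each fixed $s$, the function $\phi_s(x):=\ell'(s-\Phi(x))$ lies in $L^2(\sR,\mu)$ and expands as $\phi_s=\sum_{k\ge 0}a_k(s)\,h_k$. Applying~\eqref{cov} componentwise gives
\[
\mathrm{Cov}\bigl(\phi_s(X_i),\phi_s(X_j)\bigr)\;=\;\sum_{k\ge 1}a_k(s)^{2}\,C_{ij}^{k},
\]
and since $|C_{ij}|\le 1$ (a genuine correlation) with $C_{ii}=1$, one has $|C_{ij}^k|\le |C_{ij}|$ for every $k\ge 1$, whence $\sum_{i,j}|C_{ij}^k|\le n+\Delta$ uniformly in $k$. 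This is the critical observation that decouples the Hermite index $k$ from the dependence indices $(i,j)$. Combining with Parseval in $L^2(\sR,\mu)$, which collapses $\sum_{k\ge 1}a_k(s)^2$ into $\mathrm{Var}\,\phi_s(X_1)$, produces
\[
\mathrm{Var}\,Z(s)\;\le\;\frac{n+\Delta}{n^{2}}\,\mathrm{Var}\,\phi_s(X_1),
\]
and the change of variables $y=\Phi(x)$ identifies $\mathrm{Var}\,\phi_s(X_1)$ with precisely the integrand of $D(\ell)^2$.

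To assemble the pieces, a single Cauchy--Schwarz on the length-$3$ interval $[-1,2]$ turns $\int\sqrt{\mathrm{Var}\,\phi_s(X_1)}\,ds$ into $\sqrt{3}\,D(\ell)$, yielding the claimed order $\sqrt{(n+\Delta)/n^{2}}\,D(\ell)$ up to a small absolute constant. The only obstacle I foresee is mildly cosmetic: the clean FTC route outlined above already gives the bound with constant $\sqrt{3}$, whereas the stated constant $\sqrt{6}+\sqrt{3}$ suggests that the authors prefer to run the genuine multi-scale chaining advertised in the introduction, splitting the sup into a discrete skeleton plus oscillations within a chaining net and paying an additional $\sqrt{6}$ factor for the skeleton term. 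The substantive technical insight, however, is precisely the inequality $|C_{ij}^k|\le |C_{ij}|$ paired with Parseval; without this decoupling, the Hermite modes could not be reassembled into a single $L^2$-norm controlled by the dependence-independent functional $D(\ell)$.
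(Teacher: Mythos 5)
Your proof is correct, and it takes a genuinely different route from the paper's. The paper runs a dyadic $L^2$-chaining argument on $[-1,2]$: it bounds the sup of each scale-$m$ increment by the root of the sum of squared increments over the net, uses Cauchy--Schwarz to convert each squared increment $c_k(s,t)^2$ into $\tfrac{3}{2^m}\int (c_k'(t))^2\,dt$, and sums the geometric series $\sum_m \sqrt{3/2^m}=\sqrt6+\sqrt3$. You instead observe that $G(t)=\widehat Q_n(t)-\mathbf{E}\widehat Q_n(t)$ is a.s.\ $C^1$ with $G(-1)=0$, so
\[
\sup_t|G(t)|\le \int_{-1}^2 |Z(s)|\,ds,\qquad Z(s)=\widehat Q_n'(s)-\mathbf{E}\widehat Q_n'(s),
\]
and then apply $\mathbf{E}|Z(s)|\le \|Z(s)\|_2$ pointwise followed by one Cauchy--Schwarz over the length-$3$ interval. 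The substantive core is identical in both arguments --- the Hermite/Mehler expansion, the decoupling inequality $|C_{ij}^k|\le|C_{ij}|$ giving $\sum_{i,j}|C_{ij}^k|\le n+\Delta$ uniformly in $k\ge1$, and Parseval to identify $\sum_{k\ge1}(c_k'(s))^2$ with the variance integrand of $D(\ell)^2$ --- and indeed your $a_k(s)$ are exactly the paper's $c_k'(s)$. What your route buys is simplicity and a strictly better constant, $\sqrt3$ in place of $\sqrt6+\sqrt3$, because the smoothness of $\ell$ lets you collapse the entire multi-scale hierarchy into a single integral of the derivative process; what the chaining buys is robustness, since it needs only an increment bound $\|G(t)-G(s)\|_2$ and would survive if $\ell$ were merely Lipschitz or H\"older rather than $C^1$. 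Under the hypotheses as stated, your argument is complete and yields the lemma with room to spare.
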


\begin{proof}
We write the expansion of $\ell(t-\Phi(x))$ in $L^2(\sR,\mu)$ as

\begin{equation}\label{expansion}
    \ell(t-\Phi(x))=\sum\limits_{k\geq 0} c_k(t)h_k(x),
\end{equation}
where
\begin{equation}
    c_k(t)=\int_\sR \ell(t-\Phi(x))h_k(x)d\mu.\label{ct}
\end{equation}
We denote a mean zero process
\begin{equation*}
    G(t)\overset{\triangle}{=}\widehat{Q}_n(t)-\mathbf{E}\widehat{Q}_n(t)=\frac{1}{n}\sum_{i=1}^n \ell(t-\Phi(X_i))-\mathbf{E}\left(\frac{1}{n}\sum_{i=1}^n \ell(t-\Phi(X_i))\right).
\end{equation*}
We denote by $\mu_C$ the Gaussian measure in $\sR^n$ defined by $\mathcal{N}(0,C)$. Then we can write the expansion of $G(t)$ in $L^2(\sR^n,\mu_C)$ by

\begin{equation}\label{Gt}
G(t)\overset{L^2}{=}\frac{1}{n}\sum_{i=1}^n\sum_{k\geq 0}c_k(t)h_k(X_i)-\mathbf{E}\frac{1}{n}\sum_{i=1}^n\sum_{k\geq 0}c_k(t)h_k(X_i),\quad \forall t.
\end{equation}
By considering $k'=0$ and $k> 0$ in Eq.~\eqref{cov}, we get $\mathbf{E}h_k(X_i)=0$ for $k>0$. What's more, by the definition of $h_0(x)$, we have $h_0(X_i)=\mathbf{E}h_0(X_i)=1$. Plug these results into Eq.~\eqref{Gt}, we get

\begin{equation}\label{Gtfinal}
G(t)\overset{L^2}{=}\frac{1}{n}\sum_{i=1}^n\sum_{k\geq 1}c_k(t)h_k(X_i),\quad \forall t.
\end{equation}
For simplicity, we let $c_k(s,t)=c_k(t)-c_k(s)$. Then by the expansion Eq.~\eqref{Gtfinal}, the second order increments of $G(t)$ can be bounded as

\begin{align}\nonumber
    \Vert G(t)-G(s) \Vert_2^2=& \mathbf{E}(G(t)-G(s))^2\\ \nonumber
    =& \mathbf{E}\left(\frac{1}{n}\sum_{i=1}^n\sum_{k\geq 1}c_k(t)h_k(X_i)-\frac{1}{n}\sum_{i=1}^n\sum_{k\geq 1}c_k(s)h_k(X_i)\right)^2\\ \nonumber
    =&\frac{1}{n^2}\sum_{i,j=1}^n \mathbf{E}\left(\sum_{k\geq 1}c_k(s,t)h_k(X_i)\right)\left(\sum_{k\geq 1}c_k(s,t)h_k(X_j)\right)\\ \nonumber
    =&\frac{1}{n^2}\sum_{i,j=1}^n \sum_{k\geq 1}c_k(s,t)^2(Cov(X_i,X_j))^k\\ 
    \leq &\frac{1}{n^2}\sum_{i,j=1}^n \sum_{k\geq 1}c_k(s,t)^2\vert Cov(X_i,X_j)\vert=\frac{1}{n^2}(n+\Delta)\sum_{k\geq 1}c_k(s,t)^2.\label{Gtincrement}
\end{align}
We consider equidistant $\frac{3}{2^m}$-nets $T_m$ of $[-1,2]$ for $m\in \mathbb{N}$. Then $\forall t\in[-1,2]$, there exists a sequence of points $\pi_m(t)\in T_m$ satisfying 
\begin{align*}
    &\pi_1(t)=-1,\\
    &\vert \pi_m(t)-\pi_{m+1}(t)\vert=\frac{3}{2^m},\\
    &\lim_{m\rightarrow{\infty}}\pi_m(t)=t.
\end{align*}
Since the path of $G(t)$ is continuous, we have

\begin{equation*}
    G(t)-G(-1)=\sum_{m\geq 1} G(\pi_{m+1}(t))-G(\pi_m(t)).
\end{equation*}
Keep in mind that $G(-1)=0$, then we have

\begin{align}\nonumber
    \mathbf{E}\sup_t\vert G(t)\vert&=\mathbf{E}\sup_t\vert\sum_{m\geq 1} G(\pi_{m+1}(t))-G(\pi_m(t))\vert\\ \nonumber
    &\leq \mathbf{E}\sup_t\sum_{m\geq 1}\vert G(\pi_{m+1}(t))-G(\pi_m(t))\vert\\ \nonumber
    &\leq \mathbf{E}\sum_{m\geq 1}\sup_t\vert G(\pi_{m+1}(t))-G(\pi_m(t))\vert\\
    &=\sum_{m\geq 1}\mathbf{E}\sup_t\vert G(\pi_{m+1}(t))-G(\pi_m(t))\vert.\label{chaining1}
\end{align}
Since $\pi_m(t)\in T_m$, $\pi_{m+1}(t)\in T_{m+1}$ and $\vert \pi_m(t)-\pi_{m+1}(t)\vert=\frac{3}{2^m}$, the expectation of supremum can be bounded as
\begin{align*}
    (\mathbf{E}\sup_t\vert G(\pi_{m+1}(t))-G(\pi_m(t))\vert)^2\leq&\mathbf{E}(\sup_t\vert G(\pi_{m+1}(t))-G(\pi_m(t))\vert)^2\\
    \leq & \mathbf{E}\sum_{a=1}^{2^m}  (G(\frac{3a}{2^m}-1)-G(\frac{3(a-1)}{2^m}-1))^2     \\
    \leq & \sum_{a=1}^{2^m}\frac{1}{n^2} (n+\Delta) \sum_{k\geq 1} c_k(\frac{3(a-1)}{2^m}-1,\frac{3a}{2^m}-1)^2
\end{align*}
On the other hand, by Cauchy-Schwarz inequality, we have

\begin{equation*}
    c_k(\frac{3(a-1)}{2^m}-1,\frac{3a}{2^m}-1)^2=\left(\int_{\frac{3(a-1)}{2^m}-1}^{\frac{3a}{2^m}-1} c_k'(t)dt\right)^2\leq \left(\int_{\frac{3(a-1)-1}{2^m}}^{\frac{3a}{2^m}-1} 1dt\right) \left(\int_{\frac{3(a-1)}{2^m}-1}^{\frac{3a}{2^m}-1} (c_k'(t))^2dt\right).
\end{equation*}
Thus,
\begin{align}\nonumber
    (\mathbf{E}\sup_t\vert G(\pi_{m+1}(t))-G(\pi_m(t))\vert)^2\leq& \sum_{a=1}^{2^m}\frac{1}{n^2} (n+\Delta) \sum_{k\geq 1} c_k(\frac{3(a-1)}{2^m}-1,\frac{3a}{2^m}-1)^2\\ \nonumber
    = & \frac{n+\Delta}{n^2}\sum_{k\geq 1} \sum_{a=1}^{2^m} c_k(\frac{3(a-1)}{2^m}-1,\frac{3a}{2^m}-1)^2\\ \nonumber
    \leq & \frac{n+\Delta}{n^2}\sum_{k\geq 1} \sum_{a=1}^{2^m}\frac{3}{2^m} \left(\int_{\frac{3(a-1)}{2^m}-1}^{\frac{3a}{2^m}-1} (c_k'(t))^2dt\right)\\
    =& \frac{3}{2^m}\frac{n+\Delta}{n^2}\sum_{k\geq 1}\int_{-1}^2(c_k'(t))^2dt.\label{chaining2}
\end{align}
Combine Eq.~\eqref{chaining1} and Eq.~\eqref{chaining2}, we have
\begin{align}\nonumber
    \mathbf{E}\sup_t\vert G(t)\vert \leq &\sum_{m\geq 1}\sqrt{\frac{3}{2^m}\frac{n+\Delta}{n^2}\sum_{k\geq 1}\int_{-1}^2(c_k'(t))^2dt}\\ 
    =& (\sqrt{6}+\sqrt{3})\sqrt{\frac{n+\Delta}{n^2}}\sqrt{\sum_{k\geq 1}\int_{-1}^2(c_k'(t))^2dt}.\label{Gtbound1}
\end{align}
Finally, let's take a close look at $\sum\limits_{k\geq 1}\int_{-1}^2(c_k'(t))^2dt$. Since $\ell'$ is continuous and supported on a compact set, we have 
\begin{equation*}
    \ell'(t-\Phi(x))\overset{L^2}{=}\sum_{k\geq 0}c_k'(t)h_k(x).
\end{equation*}
Therefore, 
\begin{align*}
    \sum_{k\geq 0}(c_k'(t))^2&=\int_\sR \left(\sum_{k\geq 0}c_k'(t)h_k(x)\right)\left(\sum_{k\geq 0}c_k'(t)h_k(x)\right)d\mu \\
    &= \int_\sR (\ell'(t-\Phi(x)))^2d\mu \\
    &=\int_0^1 (\ell'(t-y))^2 dy.
\end{align*}
On the other hand, by Eq.~\eqref{ct} we have

\begin{equation*}
    c_0'(t)=\left(\int_\sR \ell(t-\Phi(x))d\mu\right)'=\int_\sR \ell'(t-\Phi(x))d\mu=\int_0^1 \ell'(t-y)dy.
\end{equation*}
Thus,

\begin{equation}
    \sum_{k\geq 1}(c_k'(t))^2=\int_0^1 (\ell'(t-y))^2 dy-\left(\int_0^1 \ell'(t-y)dy\right)^2. \label{aggregation}
\end{equation}
Combine Eq.~\eqref{Gtbound1} and Eq.~\eqref{aggregation}, we have 

\begin{equation*}
    \mathbf{E}\sup_t\vert G(t)\vert \leq (\sqrt{6}+\sqrt{3})D(\ell)\sqrt{\frac{n+\Delta}{n^2}},
\end{equation*}
where $D(\ell)$ is a functional of $\ell(\cdot)$:

\begin{equation*}
    D(\ell)^2=\int_{-1}^2 \left[\int_0^1 (\ell'(t-y))^2 dy-\left(\int_0^1 \ell'(t-y)dy\right)^2\right] dt.
\end{equation*}
\end{proof}

\begin{rem}
We would like to point out that the rate regarding $\Delta$ in Lemma \ref{lemma1} is optimal. For $\Delta=\Omega(n)$, we choose the covariance matrix $C$ by

\begin{equation*}
    C_{ij}=\left\{
\begin{array}{rcl}
1     &      & i,j\leq \lfloor \frac{1+\sqrt{1+4\Delta}}{2} \rfloor \\
1     &      & i=j \\
\xi   &      & otherwise,
\end{array} \right.
\end{equation*}
where $\xi$ ensures the equality $\Delta=\sum\limits_{i\neq j}\vert C_{ij}\vert$ holds. That is to say, there are $\lfloor \frac{1+\sqrt{1+4\Delta}}{2} \rfloor$ elements of the multivariate Gaussian $X\sim \mathcal{N}(0,C)$ take the same value. Thus, with high probability we have

\begin{equation*}
    \sup_t\vert\widehat{Q}_n(t)-\mathbf{E}\widehat{Q}_n(t) \vert=\Omega\left(\frac{1}{n}\sqrt{\Delta}\right).
\end{equation*}
\end{rem}

Now we are going to deduce the finite sample bound for empirical distribution $\widehat{F}_n(t)$. Instead of bounding the difference $\displaystyle \sup_t\vert \widehat{Q}_n(t)-\widehat{F}_n(t)\vert$ directly, we translate the process $\widehat{Q}_n(t)$ on the index set and squeeze $\widehat{F}_n(t)$ by $\widehat{Q}_n(t-\epsilon)$ and $\widehat{Q}_n(t+\epsilon)$.

\begin{thm}\label{thm2}
Consider $X=(X_1,X_2,\dots,X_n)\sim \mathcal{N}(0,C)$ is a multivariate Gaussian random vector with covariance matrix $C$. Every element of $X$ has unit variance, that is to say, $C_{ii}=1$. 
We consider the empirical distribution
\begin{equation*}
    \widehat{F}_n(t)=\frac{1}{n}\sum_{i=1}^n \textbf{1}(\Phi(X_i)\leq t),
\end{equation*}
where $\Phi(\cdot)$ is the cumulative distribution function of standard Gaussian. Then we have

\begin{equation*}
    \mathbf{E}\sup_t\vert\widehat{F}_n(t)-\mathbf{E}\widehat{F}_n(t) \vert\leq 16\sqrt[3]{\frac{n+\Delta}{n^2}},
\end{equation*}
where $\Delta=\sum\limits_{i\neq j}\vert C_{ij}\vert$.
\end{thm}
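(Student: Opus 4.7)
The plan is to smooth the indicator into a family of $C^1$ functions $\ell_\epsilon$ parametrized by a width $\epsilon>0$, sandwich $\widehat{F}_n$ between two translates of the corresponding smoothed process $\widehat{Q}_n$, apply Lemma \ref{lemma1} to $\widehat{Q}_n$, and optimize $\epsilon$. The cube-root rate will emerge by balancing an $\epsilon$-size deterministic bias against an $\epsilon^{-1/2}$-size stochastic term.

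For $\epsilon\in(0,1]$, fix a continuously differentiable nondecreasing $\ell_\epsilon$ with $\ell_\epsilon\equiv 0$ on $(-\infty,-\epsilon/2]$ and $\ell_\epsilon\equiv 1$ on $[\epsilon/2,\infty)$, so that $\ell_\epsilon'$ is supported on $[-\epsilon/2,\epsilon/2]\subseteq[-1/2,1/2]$ and Lemma \ref{lemma1} applies. Monotonicity yields the pointwise sandwich
\begin{equation*}
\ell_\epsilon\!\left(t-\tfrac{\epsilon}{2}-\Phi(X_i)\right)\leq \mathbf{1}(\Phi(X_i)\leq t)\leq \ell_\epsilon\!\left(t+\tfrac{\epsilon}{2}-\Phi(X_i)\right),
\end{equation*}
which after averaging gives $\widehat{Q}_n(t-\epsilon/2)\leq\widehat{F}_n(t)\leq \widehat{Q}_n(t+\epsilon/2)$ and, on passing to expectations, the analogous bound for the means. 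Combining produces the key reduction
\begin{equation*}
\sup_t|\widehat{F}_n(t)-\mathbf{E}\widehat{F}_n(t)|\leq \sup_s|\widehat{Q}_n(s)-\mathbf{E}\widehat{Q}_n(s)|+\sup_t\bigl(\mathbf{E}\widehat{Q}_n(t+\tfrac{\epsilon}{2})-\mathbf{E}\widehat{Q}_n(t-\tfrac{\epsilon}{2})\bigr).
\end{equation*}
Because $\Phi(X_i)\sim U[0,1]$, a short Fubini computation rewrites the bias term as $\int_{-\epsilon/2}^{\epsilon/2}[\ell_\epsilon(t+v)-\ell_\epsilon(t+v-1)]\,dv$, which is bounded by $\epsilon$ since $0\leq\ell_\epsilon\leq 1$.

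Taking expectation and applying Lemma \ref{lemma1} to the stochastic term gives
\begin{equation*}
\mathbf{E}\sup_t|\widehat{F}_n(t)-\mathbf{E}\widehat{F}_n(t)|\leq (\sqrt{6}+\sqrt{3})\,D(\ell_\epsilon)\sqrt{\frac{n+\Delta}{n^2}}+\epsilon.
\end{equation*}
Dropping the nonnegative subtraction in the definition of $D(\ell_\epsilon)$ and applying Fubini yields $D(\ell_\epsilon)^2\leq \int_{\sR}(\ell_\epsilon'(u))^2\,du$. Since $\int \ell_\epsilon'=1$ on a support of length $\epsilon$, Cauchy-Schwarz forces this integral to be at least $1/\epsilon$, and a near-rectangular smoothing (e.g.\ a cubic or trigonometric interpolant between $0$ and $1$ on $[-\epsilon/2,\epsilon/2]$) makes it essentially tight, so $D(\ell_\epsilon)\leq C_0/\sqrt{\epsilon}$ for a small absolute constant $C_0$. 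Choosing $\epsilon\asymp((n+\Delta)/n^2)^{1/3}$ balances the two summands and delivers the cube-root rate.

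The main obstacle is constant bookkeeping: a genuinely $C^1$ choice of $\ell_\epsilon$ must keep $D(\ell_\epsilon)\sqrt{\epsilon}$ close to $1$, and its product with $(\sqrt{6}+\sqrt{3})^{2/3}$ and the optimization overhead $\tfrac{3}{2}\cdot 2^{1/3}$ from minimizing $K\epsilon^{-1/2}+\epsilon$ must stay below $16$. A back-of-the-envelope computation with $C_0\to 1$ yields a prefactor near $5$, leaving comfortable slack, so any reasonable smooth nondecreasing interpolant suffices and the $16$ in the statement is far from tight.
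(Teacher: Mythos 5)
Your proof follows essentially the same route as the paper's: smooth the indicator into a $C^1$ ramp of width $\epsilon$, sandwich $\widehat{F}_n$ between translates of $\widehat{Q}_n$, invoke Lemma \ref{lemma1}, bound the deterministic bias by $\epsilon$, and balance the two terms to obtain the cube-root rate; your constant accounting is sound (the paper takes the tent derivative $\ell'(x)=\epsilon^{-2}(\epsilon-\vert x\vert)^+$, gets $D(\ell)\leq\sqrt{2/\epsilon}$, and reaches $12\sqrt{(n+\Delta)/(\epsilon n^2)}+4\epsilon$ before optimizing). The one point you gloss over, which the paper handles explicitly, is the regime where the optimal $\epsilon$ exceeds the admissible window (i.e.\ $\frac{n+\Delta}{n^2}$ of constant order), where one must fall back on the trivial bound $\mathbf{E}\sup_t\vert\widehat{F}_n(t)-\mathbf{E}\widehat{F}_n(t)\vert\leq 1\leq 16\sqrt[3]{(n+\Delta)/n^2}$.
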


\begin{proof}
We choose a $\ell$ in Lemma \ref{lemma1} in the following way:
\begin{align*}
    &\ell'(x)=\frac{1}{\epsilon^2}(\epsilon-\vert x\vert)^+,\\
    &\ell(-\frac{1}{2})=0.
\end{align*}
Where $\epsilon$ is a parameter less than $\frac{1}{2}$. It is easy to calculate that
\begin{equation}
    D(\ell)\leq \sqrt{3\frac{2\epsilon^3}{3\epsilon^4}}=\sqrt{\frac{2}{\epsilon}}.\label{dl}
\end{equation}
And, since $\vert\ell(\cdot)-\textbf{1}(\cdot\geq 0)\vert$ is only supported on $[-\epsilon,\epsilon]$ and is bounded by $\frac{1}{2}$, we have
\begin{align}
    \vert \mathbf{E} \widehat{Q}_n(t)-\mathbf{E}\widehat{F}_n(t)\vert&=\vert \int_0^1 \ell(t-y)dy-\textbf{1}(t-y\geq 0)dy\vert \leq \frac{1}{2}2\epsilon=\epsilon. \label{expdifference}
\end{align}
Thus by Lemma \ref{lemma1}, Eq.~\eqref{dl}, and Eq.~\eqref{expdifference} we have

\begin{equation}
    \mathbf{E}\sup_t\vert \widehat{Q}_n(t)-\mathbf{E}\widehat{F}_n(t)\vert \leq \mathbf{E}\sup_t\vert \widehat{Q}_n(t)-\mathbf{E}\widehat{Q}_n(t)\vert +\sup_t \vert \mathbf{E} \widehat{Q}_n(t)-\mathbf{E}\widehat{F}_n(t)\vert\leq 6\sqrt{\frac{n+\Delta}{\epsilon n^2}}+\epsilon.\label{ori}
\end{equation}
One simple variation of Eq.~\eqref{ori} is 

\begin{align}\label{+}
    \mathbf{E}\sup_t( \widehat{Q}_n(t)-\mathbf{E}\widehat{F}_n(t))^+ &\leq 6\sqrt{\frac{n+\Delta}{\epsilon n^2}}+\epsilon,\\ 
    \mathbf{E}\sup_t( \widehat{Q}_n(t)-\mathbf{E}\widehat{F}_n(t))^- &\leq 6\sqrt{\frac{n+\Delta}{\epsilon n^2}}+\epsilon.\label{-}
\end{align}
By the definition of $\ell$, we have

\begin{equation*}
    \ell(x-\epsilon)\leq \textbf{1}(x\geq 0) \leq \ell(x+\epsilon).
\end{equation*}
Thus,

\begin{equation*}
    \widehat{Q}_n(t-\epsilon)\leq \widehat{F}_n(t)\leq \widehat{Q}_n(t+\epsilon).
\end{equation*}
By plugging $t+\epsilon$ and $t-\epsilon$ into Eq.~\eqref{+} and Eq.~\eqref{-} respectively, we have

\begin{align*}
    \mathbf{E}\sup_t( \widehat{F}_n(t)-\mathbf{E}\widehat{F}_n(t+\epsilon))^+\leq \mathbf{E}\sup_t(\widehat{Q}_n(t+\epsilon)-\mathbf{E}\widehat{F}_n(t+\epsilon))^+ &\leq 6\sqrt{\frac{n+\Delta}{\epsilon n^2}}+\epsilon,\\ 
    \mathbf{E}\sup_t( \widehat{F}_n(t)-\mathbf{E}\widehat{F}_n(t-\epsilon))^-\leq \mathbf{E}\sup_t(\widehat{Q}_n(t-\epsilon)-\mathbf{E}\widehat{F}_n(t-\epsilon))^- &\leq 6\sqrt{\frac{n+\Delta}{\epsilon n^2}}+\epsilon.
\end{align*}
What's more, by the definition of $\widehat{F}_n(t)$ we have

\begin{align*}
    \mathbf{E}\widehat{F}_n(t+\epsilon)&\leq \mathbf{E}\widehat{F}_n(t)+\epsilon,\\
    \mathbf{E}\widehat{F}_n(t-\epsilon)&\geq \mathbf{E}\widehat{F}_n(t)-\epsilon.
\end{align*}
As a result, we have

\begin{align*}
    \mathbf{E}\sup_t( \widehat{F}_n(t)-\mathbf{E}\widehat{F}_n(t))^+&\leq \mathbf{E}\sup_t( \widehat{F}_n(t)-\mathbf{E}\widehat{F}_n(t+\epsilon)+\epsilon)^+\\
    & \leq \mathbf{E}\sup_t( \widehat{F}_n(t)-\mathbf{E}\widehat{F}_n(t+\epsilon))^++\epsilon\leq 6\sqrt{\frac{n+\Delta}{\epsilon n^2}}+2\epsilon,\\
    \mathbf{E}\sup_t( \widehat{F}_n(t)-\mathbf{E}\widehat{F}_n(t))^-&\leq \mathbf{E}\sup_t( \widehat{F}_n(t)-\mathbf{E}\widehat{F}_n(t-\epsilon)-\epsilon)^-\\
    & \leq \mathbf{E}\sup_t( \widehat{F}_n(t)-\mathbf{E}\widehat{F}_n(t+\epsilon))^-+\epsilon\leq 6\sqrt{\frac{n+\Delta}{\epsilon n^2}}+2\epsilon.
\end{align*}
Combine the two inequalities above, we get

\begin{align}\nonumber
    \mathbf{E}\sup_t\vert \widehat{F}_n(t)-\mathbf{E}\widehat{F}_n(t)\vert&\leq\mathbf{E}(\sup_t( \widehat{F}_n(t)-\mathbf{E}\widehat{F}_n(t))^++\sup_t( \widehat{F}_n(t)-\mathbf{E}\widehat{F}_n(t))^-)\\ \nonumber
    &=\mathbf{E}\sup_t( \widehat{F}_n(t)-\mathbf{E}\widehat{F}_n(t))^++\mathbf{E}\sup_t( \widehat{F}_n(t)-\mathbf{E}\widehat{F}_n(t))^-\\
    &\leq  12\sqrt{\frac{n+\Delta}{\epsilon n^2}}+4\epsilon.\label{fb}
\end{align}
When $\frac{n+\Delta}{n^2}\leq \frac{1}{18}$, by taking $\epsilon=\sqrt[3]{\frac{9(n+\Delta)}{4n^2}}$ in Eq.~\eqref{fb}, we get
\begin{equation*}
    \mathbf{E}\sup_t\vert \widehat{F}_n(t)-\mathbf{E}\widehat{F}_n(t)\vert\leq  16\sqrt[3]{\frac{n+\Delta}{n^2}}.
\end{equation*}
Otherwise, if $\frac{n+\Delta}{n^2}> \frac{1}{18}$, we simply have
\begin{equation*}
    \mathbf{E}\sup_t\vert \widehat{F}_n(t)-\mathbf{E}\widehat{F}_n(t)\vert\leq 1\leq 16\sqrt[3]{\frac{n+\Delta}{n^2}}.
\end{equation*}
To sum up, we finally have
\begin{equation*}
    \mathbf{E}\sup_t\vert \widehat{F}_n(t)-\mathbf{E}\widehat{F}_n(t)\vert\leq  16\sqrt[3]{\frac{n+\Delta}{n^2}}.
\end{equation*}
\end{proof}

\section{Proof of Main Results}
\begin{rthm}
Consider a standardized Gaussian process $\{ X_k,k\in \mathbb{N}\}$. Assume that the dependence measure satisfies

\begin{equation*}
    \lim_{n\rightarrow \infty} \frac{\Delta(n)}{n^2}=0.
\end{equation*}
Then we have 

\begin{equation*}
    \sup_t\vert\widehat{F}_n(t)-\mathbf{E}\widehat{F}_n(t) \vert\xrightarrow{P} 0.
\end{equation*}
\end{rthm}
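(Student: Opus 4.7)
The plan is to derive the theorem as an immediate corollary of Theorem \ref{thm2} combined with Markov's inequality, since all of the hard work (the Hermite expansion, the chaining bound in $L^2(\mathbb{R}^n,\mu_C)$, and the smoothing argument that passes from $\widehat{Q}_n$ to $\widehat{F}_n$) has already been absorbed into the finite-sample bound of that theorem.

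First, I would fix $n$ and view the marginal vector $(X_1,\dots,X_n)$ as a centered multivariate Gaussian with covariance matrix $C$ whose diagonal entries equal $1$, which is precisely the setting of Theorem \ref{thm2} because the process is standardized. Under this identification, the off-diagonal sum appearing in Theorem \ref{thm2} matches the process-level dependence measure,
\[
\Delta \;=\; \sum_{i\ne j}|C_{ij}| \;=\; 2\sum_{i<j\le n}|Cov(X_i,X_j)| \;=\; \Delta(n),
\]
so Theorem \ref{thm2} applies verbatim and yields
\[
\mathbf{E}\sup_t\bigl|\widehat{F}_n(t)-\mathbf{E}\widehat{F}_n(t)\bigr| \;\le\; 16\sqrt[3]{\frac{n+\Delta(n)}{n^2}} \;=\; 16\sqrt[3]{\frac{1}{n}+\frac{\Delta(n)}{n^2}}.
\]

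Second, the hypothesis $\Delta(n)/n^2\to 0$ forces both terms under the cube root to vanish, so the right-hand side tends to $0$. Hence $\sup_t|\widehat{F}_n(t)-\mathbf{E}\widehat{F}_n(t)|$ converges to zero in $L^1$, and therefore in probability: for every $\epsilon>0$, Markov's inequality gives
\[
P\!\left(\sup_t\bigl|\widehat{F}_n(t)-\mathbf{E}\widehat{F}_n(t)\bigr|>\epsilon\right) \;\le\; \frac{1}{\epsilon}\,\mathbf{E}\sup_t\bigl|\widehat{F}_n(t)-\mathbf{E}\widehat{F}_n(t)\bigr| \;\longrightarrow\; 0.
\]

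Because the substantive analysis already resides in Lemma \ref{lemma1} and Theorem \ref{thm2}, there is essentially no obstacle at this step; the only item worth checking carefully is the bookkeeping identification $\Delta=\Delta(n)$, which is immediate from unpacking the two definitions. A minor subtlety is that the supremum is over all real $t$, while the bound in Theorem \ref{thm2} is proved for the empirical distribution on $[0,1]$ via $\Phi(X_i)$; but $\widehat{F}_n(t)$ equals $0$ for $t<0$ and $1$ for $t>1$ with the same behaviour for $\mathbf{E}\widehat{F}_n(t)$, so the supremum is attained in $[0,1]$ and no additional argument is needed.
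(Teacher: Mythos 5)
Your proposal is correct and follows essentially the same route as the paper: identify the marginal vector with the setting of Theorem \ref{thm2} so that $\Delta=\Delta(n)$, let the finite-sample bound $16\sqrt[3]{(n+\Delta(n))/n^2}$ tend to zero under the hypothesis, and conclude by Markov's inequality. The additional remarks about the bookkeeping identification and the supremum over all of $\mathbb{R}$ are fine but not needed beyond what the paper already does.
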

\begin{proof}
Since $\displaystyle\lim_{n\rightarrow \infty} \frac{\Delta(n)}{n^2}=0$, by Theorem \ref{thm2} we have 
\begin{equation*}
    \lim_{n\rightarrow\infty} \mathbf{E} \sup_t\vert\widehat{F}_n(t)-\mathbf{E}\widehat{F}_n(t)\vert=0.
\end{equation*}
Then for all $\epsilon>0$, by Markov inequality, we have

\begin{equation*}
     \lim_{n\rightarrow\infty} P(\sup_t\vert\widehat{F}_n(t)-\mathbf{E}\widehat{F}_n(t)\vert\geq \epsilon)\leq \lim_{n\rightarrow\infty}\frac{\mathbf{E} \sup_t\vert\widehat{F}_n(t)-\mathbf{E}\widehat{F}_n(t)\vert}{\epsilon}=0.
\end{equation*}
That is to say,
\begin{equation*}
    \sup_t\vert\widehat{F}_n(t)-\mathbf{E}\widehat{F}_n(t) \vert\xrightarrow{P} 0.
\end{equation*}

\end{proof}

\begin{rcor}
The following statements are equivalent.
\begin{itemize}
    \item $\{ X_k,k\in \mathbb{N}\}$ are weakly dependent normal variables;
    \item $\lim\limits_{n\rightarrow\infty}\sup\limits_tP(\vert\widehat{F}_n(t)-\mathbf{E}\widehat{F}_n(t) \vert>\epsilon)=0,\quad \forall \epsilon>0$;
    \item $\lim\limits_{n\rightarrow\infty}P(\sup\limits_t\vert\widehat{F}_n(t)-\mathbf{E}\widehat{F}_n(t) \vert>\epsilon)=0,\quad \forall \epsilon>0$.
\end{itemize}
\end{rcor}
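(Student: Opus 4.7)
The plan is to prove the three-way equivalence as a cycle (1)$\Rightarrow$(3)$\Rightarrow$(2)$\Rightarrow$(1), with each arrow handled by a previously-established or cited result; no new estimates are needed.

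First I would dispatch (1)$\Rightarrow$(3), which is exactly the content of Theorem \ref{thm1}: the hypothesis $\Delta(n)/n^2 \to 0$ coincides with the definition of weak dependence for the standardized Gaussian process, and Theorem \ref{thm1} delivers the uniform convergence in probability of $\sup_t|\widehat{F}_n(t)-\mathbf{E}\widehat{F}_n(t)|$ that appears verbatim in statement (3). Nothing further is required here.

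Next, (3)$\Rightarrow$(2) is the elementary step. For every fixed $t$ we have the pointwise domination
\begin{equation*}
    \{|\widehat{F}_n(t)-\mathbf{E}\widehat{F}_n(t)|>\epsilon\}\subseteq \{\sup_s|\widehat{F}_n(s)-\mathbf{E}\widehat{F}_n(s)|>\epsilon\},
\end{equation*}
so taking probabilities and then the supremum over $t$ on the left gives
\begin{equation*}
    \sup_t P(|\widehat{F}_n(t)-\mathbf{E}\widehat{F}_n(t)|>\epsilon)\leq P(\sup_s|\widehat{F}_n(s)-\mathbf{E}\widehat{F}_n(s)|>\epsilon),
\end{equation*}
after which passing to the limit as $n\to\infty$ yields (2) from (3).

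Finally, (2)$\Rightarrow$(1) is precisely the content of \cite[Theorem 1]{Azriel2015The}, which says that pointwise convergence in probability (uniform in $t$) of the empirical distribution of a standardized Gaussian process forces the weak dependence condition $\Delta(n)=o(n^2)$. I would simply cite this result to close the cycle. There is no genuine obstacle in this proof: the only nontrivial arrow is (1)$\Rightarrow$(3), and that work has already been done by building Theorem \ref{thm2} and then Theorem \ref{thm1}; the reverse direction (2)$\Rightarrow$(1) is imported from the literature, so the corollary amounts to assembling these three facts.
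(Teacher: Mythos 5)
Your proposal is correct and follows essentially the same route as the paper: Theorem \ref{thm1} for the implication from weak dependence to uniform convergence in probability, the trivial event containment for the step down to pointwise convergence, and \cite[Theorem 1]{Azriel2015The} to return to weak dependence. The only point to tighten is that the cited theorem, as the paper records it, is an equivalence between $\Delta(n)=o(n^2)$ and $\lim_{n\rightarrow\infty}\sup_t\mathbf{E}\vert\widehat{F}_n(t)-\mathbf{E}\widehat{F}_n(t)\vert^2=0$ rather than a statement about convergence in probability, so for (2)$\Rightarrow$(1) you must first upgrade uniform pointwise convergence in probability to uniform pointwise $L^2$ convergence; since $\vert\widehat{F}_n(t)-\mathbf{E}\widehat{F}_n(t)\vert\leq 1$ this is immediate, and the paper spells out exactly this bridging step.
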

\begin{proof}
\cite[Theorem 1]{Azriel2015The} states that
\begin{equation}\label{quotethm}
\lim_{n\rightarrow \infty}\frac{\Delta(n)}{n^2}=0\Longleftrightarrow \lim_{n\rightarrow \infty}\sup_t\mathbf{E}\vert\widehat{F}_n(t)-\mathbf{E}\widehat{F}_n(t) \vert^2=0.
\end{equation}
As the first step, we have 

\begin{align*}
\lim_{n\rightarrow\infty}\sup_t P(\vert\widehat{F}_n(t)-\mathbf{E}\widehat{F}_n(t) \vert>\epsilon)\leq \lim_{n\rightarrow\infty}\sup_t\frac{1}{\epsilon^2}\mathbf{E}\vert\widehat{F}_n(t)-\mathbf{E}\widehat{F}_n(t) \vert^2.
\end{align*}
Thus $\displaystyle \lim_{n\rightarrow \infty}\sup_t\mathbf{E}\vert\widehat{F}_n(t)-\mathbf{E}\widehat{F}_n(t) \vert^2=0\Longrightarrow \lim\limits_{n\rightarrow\infty}\sup\limits_tP(\vert\widehat{F}_n(t)-\mathbf{E}\widehat{F}_n(t) \vert>\epsilon)=0,\quad \forall \epsilon>0$. On the other hand, if we assume the later statement holds, since $\vert\widehat{F}_n(t)-\mathbf{E}\widehat{F}_n(t) \vert\leq 1$, then for any given $\epsilon$ we have

\begin{align*}
\lim_{n\rightarrow \infty}\sup_t\mathbf{E}\vert\widehat{F}_n(t)-\mathbf{E}\widehat{F}_n(t) \vert^2\leq \lim_{n\rightarrow \infty} \sup_{t}\left(\epsilon^2+P(\vert\widehat{F}_n(t)-\mathbf{E}\widehat{F}_n(t) \vert>\epsilon)\right)=\epsilon^2.
\end{align*}
By the arbitrariness of $\epsilon$, we have $\displaystyle\lim_{n\rightarrow \infty}\sup_t\mathbf{E}\vert\widehat{F}_n(t)-\mathbf{E}\widehat{F}_n(t) \vert^2=0$. That is to say, 

\begin{equation}\label{l2pequivalence}
\lim_{n\rightarrow \infty}\sup_t\mathbf{E}\vert\widehat{F}_n(t)-\mathbf{E}\widehat{F}_n(t) \vert^2=0\Longleftrightarrow \lim_{n\rightarrow\infty}\sup_tP(\vert\widehat{F}_n(t)-\mathbf{E}\widehat{F}_n(t) \vert>\epsilon)=0,\quad \forall \epsilon>0.
\end{equation}
Combine Eq.~\eqref{quotethm} and Eq.~\eqref{l2pequivalence}, we get 

\begin{equation}\label{12equivalence}
\lim_{n\rightarrow \infty}\frac{\Delta(n)}{n^2}=0\Longleftrightarrow \lim_{n\rightarrow\infty}\sup_tP(\vert\widehat{F}_n(t)-\mathbf{E}\widehat{F}_n(t) \vert>\epsilon)=0,\quad \forall \epsilon>0.
\end{equation}
Theorem \ref{thm1} states that

\begin{equation}\label{logic1}
\lim_{n\rightarrow \infty}\frac{\Delta(n)}{n^2}=0\Longrightarrow \lim_{n\rightarrow\infty}P(\sup_t\vert\widehat{F}_n(t)-\mathbf{E}\widehat{F}_n(t) \vert>\epsilon)=0,\quad \forall \epsilon>0.
\end{equation}
And it is obvious that

\begin{equation}\label{logic2}
\lim_{n\rightarrow\infty}\sup_tP(\vert\widehat{F}_n(t)-\mathbf{E}\widehat{F}_n(t) \vert>\epsilon)\leq \lim_{n\rightarrow\infty}P(\sup_t\vert\widehat{F}_n(t)-\mathbf{E}\widehat{F}_n(t) \vert>\epsilon),\quad \forall \epsilon>0.
\end{equation}
Combine Eq.~\eqref{12equivalence}, Eq.~\eqref{logic1} and Eq.~\eqref{logic2}, we can conclude that the statements in Corollary \ref{corequ} are equivalent.
\end{proof}

\begin{rcor}
Consider a standardized Gaussian process $\{ X_k,k\in \mathbb{N}\}$. Suppose the covariance vanish with the time shift, that is to say, there exists a vanishing function $r(\cdot)$ with $\displaystyle\lim_{x\rightarrow \infty}r(x)=0$ and satisfying

\begin{equation*}
    \vert\mathbf{E}X_iX_j\vert \leq r(\vert i-j\vert).
\end{equation*}
Then we have 

\begin{equation*}
    \sup_t\vert\widehat{F}_n(t)-\mathbf{E}\widehat{F}_n(t) \vert\xrightarrow{P} 0.
\end{equation*}
\end{rcor}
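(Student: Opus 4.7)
The plan is to reduce Corollary \ref{cor1} directly to Theorem \ref{thm1}. Concretely, I would verify that the hypothesis $|\mathbf{E}X_iX_j| \leq r(|i-j|)$ with $r(x)\to 0$ forces $\Delta(n)/n^2\to 0$, after which Theorem \ref{thm1} delivers the desired uniform convergence in probability with no further work.

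To bound $\Delta(n)$, I would reindex the double sum by the lag $k = j-i$. Since for each $k \in \{1,\dots,n-1\}$ there are exactly $n-k$ pairs $(i,j)$ with $1\le i<j\le n$ and $j-i = k$, the hypothesis yields
\begin{equation*}
\Delta(n) = 2\sum_{1\le i<j\le n} |\mathbf{E} X_i X_j| \;\le\; 2\sum_{k=1}^{n-1}(n-k)\, r(k) \;\le\; 2n\sum_{k=1}^{n-1} r(k),
\end{equation*}
so that $\Delta(n)/n^2 \le (2/n)\sum_{k=1}^{n-1} r(k)$. It therefore suffices to show that the Cesaro average of $r$ vanishes as $n\to\infty$.

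The remaining step is a routine Cesaro argument (after replacing $r$ by $\min(r,1)$ if needed, which is harmless since $|\mathbf{E}X_iX_j|\le 1$): given $\epsilon>0$, pick $K$ so that $r(k)<\epsilon/4$ for all $k>K$, and split the sum at $K$. The prefix contribution $(1/n)\sum_{k=1}^K r(k)$ is $O(1/n)$ and thus tends to zero for fixed $K$, while the tail is bounded by $((n-1-K)/n)(\epsilon/4)<\epsilon/4$. Choosing $n$ large enough forces the total below $\epsilon$, which gives $\Delta(n)/n^2\to 0$ and completes the reduction. There is no real obstacle: all of the analytic substance lives in Theorem \ref{thm1}, and the corollary is essentially a convenient restatement under the more concrete assumption of covariance decay with the time lag.
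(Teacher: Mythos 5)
Your proposal is correct and follows essentially the same route as the paper: both reduce to Theorem \ref{thm1} by showing $\Delta(n)/n^2\to 0$ via a split of the pairs into small lags (finitely many lags, contributing $O(n)$ pairs each) and large lags (where $r$ is below $\epsilon$). Your reindexing by lag plus the Cesaro average is just a cleaner packaging of the paper's direct pair count, and the $\min(r,1)$ remark correctly handles the only minor point of care.
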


\begin{proof}
We consider the dependence measure $\Delta(n)$ in Theorem \ref{thm1}. By the definition of limitation, $\forall \delta>0$, there exists $N(\delta)\in \mathbb{N}$, s.t., for all $n\geq N(\delta)$, we have $r(n)\leq \frac{\delta}{2}$. Then for all $n> \left(1-\sqrt{1-\frac{\delta}{2}}\right)^{-1}N(\delta)$, we have

\begin{align*}
    \frac{\Delta(n)}{n^2}\leq \frac{\delta}{2}\frac{(n-N(\delta))(n-N(\delta)+1)}{n^2}+\frac{n^2-(n-N(\delta))^2}{n^2}\leq \delta.
\end{align*}
Again by the definition of limitation, we have $\lim_{n\rightarrow\infty}\frac{\Delta(n)}{n^2}=0$. By Theorem \ref{thm1}, we have
\begin{equation*}
    \sup_t\vert\widehat{F}_n(t)-\mathbf{E}\widehat{F}_n(t) \vert\xrightarrow{P} 0.
\end{equation*}
\end{proof}
In order to achieve almost surely convergence, it is equivalent to prove that

\begin{equation}\label{asconvergence}
\lim_{m\rightarrow \infty} P\left(\bigcup_{n=m}^\infty\{\sup_t\vert\widehat{F}_n(t)-\mathbf{E}\widehat{F}_n(t)\vert>\epsilon\}\right)=0, \quad \forall \epsilon>0. 
\end{equation}
Unfortunately, the finite sample bound in Theorem \ref{thm2} is not sufficient to derive Eq.~\eqref{asconvergence} directly, since the summation $\displaystyle \sum_{i=1}^\infty \sqrt[3]{\frac{n+\Delta(n)}{n^2}}$ always diverges. However, one may notice that the fluctuation in the sequence $\displaystyle\left\{\sup_t\vert\widehat{F}_n(t)-\mathbf{E}\widehat{F}_n(t)\vert\right\}_{n\in \mathbb{N}}$ is very small. Denote $D_n=\sup\limits_t\vert\widehat{F}_n(t)-\mathbf{E}\widehat{F}_n(t)\vert$, we have

\begin{align*}
D_n-D_{n+1}=&\sup_t\vert\widehat{F}_n(t)-\mathbf{E}\widehat{F}_n(t)\vert-\sup_t\vert\widehat{F}_{n+1}(t)-\mathbf{E}\widehat{F}_{n+1}(t)\vert\\
\leq & \vert\widehat{F}_n(t_n)-\mathbf{E}\widehat{F}_n(t_n)\vert-\vert\widehat{F}_{n+1}(t_n)-\mathbf{E}\widehat{F}_{n+1}(t_n)\vert\\
\leq & \vert\widehat{F}_n(t_n)-\widehat{F}_{n+1}(t_n)\vert\\
= & \vert \frac{1}{n(n+1)}\sum_{i=1}^n \textbf{1}(\Phi(X_i)\leq t_n)-\frac{1}{n+1}\textbf{1}(\Phi(X_{n+1})\leq t_n)\vert\\
\leq & \max\{\frac{1}{n(n+1)}\sum_{i=1}^n \textbf{1}(\Phi(X_i)\leq t_n), \frac{1}{n+1}\textbf{1}(\Phi(X_{n+1})\leq t_n)\}\leq \frac{1}{n+1},
\end{align*}
where $t_n=\mathop{\arg\max}\limits_{t}\vert\widehat{F}_n(t)-\mathbf{E}\widehat{F}_n(t)\vert$. In a similar way, we have 

\begin{align*}
D_{n+1}-D_n=&\sup_t\vert\widehat{F}_{n+1}(t)-\mathbf{E}\widehat{F}_{n+1}(t)\vert-\sup_t\vert\widehat{F}_n(t)-\mathbf{E}\widehat{F}_n(t)\vert\\
\leq & \vert\widehat{F}_{n+1}(t_{n+1})-\mathbf{E}\widehat{F}_{n+1}(t_{n+1})\vert-\vert\widehat{F}_n(t_{n+1})-\mathbf{E}\widehat{F}_n(t_{n+1})\vert\\
\leq & \vert\widehat{F}_{n+1}(t_{n+1})-\widehat{F}_{n}(t_{n+1})\vert\\
= & \vert \frac{1}{n+1}\textbf{1}(\Phi(X_{n+1})\leq t_{n+1})-\frac{1}{n(n+1)}\sum_{i=1}^n \textbf{1}(\Phi(X_i)\leq t_{n+1})\vert\\
\leq & \max\{\frac{1}{n+1}\textbf{1}(\Phi(X_{n+1})\leq t_{n+1}), \frac{1}{n(n+1)}\sum_{i=1}^n \textbf{1}(\Phi(X_i)\leq t_{n+1})\}\leq \frac{1}{n+1},
\end{align*}
where $t_{n+1}=\mathop{\arg\max}\limits_{t}\vert\widehat{F}_{n+1}(t)-\mathbf{E}\widehat{F}_{n+1}(t)\vert$. As a conclusion, we get 

\begin{equation*}
\vert D_n-D_{n+1}\vert \leq \frac{1}{n+1},
\end{equation*}
which implies

\begin{equation*}
\vert D_m-D_n\vert =O\left(\ln \frac{n}{m} \right).
\end{equation*}
In this way, the event in Eq.~\eqref{asconvergence} can be covered by a union of events with exponentially increasing indexes, a fact which is fundamental to the proof of almost surely convergence.

\begin{rlem}
Consider a standardized Gaussian process $\{ X_k,k\in \mathbb{N}\}$. Assume that the dependence measure satisfies

\begin{equation*}
    \sum_{i=1}^\infty \sqrt[3]{\frac{\Delta(\lfloor \gamma^i \rfloor)}{\lfloor \gamma^i \rfloor^2}}<+\infty,\quad\forall \gamma>1.
\end{equation*}
Then we have 

\begin{equation*}
    \sup_t\vert\widehat{F}_n(t)-\mathbf{E}\widehat{F}_n(t) \vert\xrightarrow{a.s.} 0.
\end{equation*}

\end{rlem}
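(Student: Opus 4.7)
My plan is to apply the Borel-Cantelli lemma along a geometric subsequence and then transfer convergence to the full sequence using the Lipschitz-type bound $\vert D_{n+1}-D_n\vert\leq 1/(n+1)$ already derived in the paragraph preceding the lemma, where $D_n:=\sup_t\vert\widehat{F}_n(t)-\mathbf{E}\widehat{F}_n(t)\vert$. Fix an auxiliary parameter $\gamma>1$, set $n_i:=\lfloor\gamma^i\rfloor$, and observe that Theorem \ref{thm2} combined with Markov's inequality gives
$$P(D_{n_i}>\epsilon)\leq \frac{16}{\epsilon}\sqrt[3]{\frac{n_i+\Delta(n_i)}{n_i^2}}.$$
Using the subadditivity $\sqrt[3]{a+b}\leq \sqrt[3]{a}+\sqrt[3]{b}$ (which follows on cubing the right-hand side), this bound is at most $16\epsilon^{-1}n_i^{-1/3}+16\epsilon^{-1}\sqrt[3]{\Delta(n_i)/n_i^2}$. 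The first term is summable in $i$ since $n_i$ grows geometrically, and the second is summable by the standing hypothesis. Borel-Cantelli then yields $\limsup_i D_{n_i}\leq \epsilon$ almost surely; intersecting the corresponding full-measure events over a countable sequence $\epsilon\downarrow 0$ gives $D_{n_i}\to 0$ almost surely.

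Next I would interpolate to the full sequence. For every $n\in[n_i,n_{i+1}]$ the telescoping consequence of the displayed Lipschitz inequality yields
$$\vert D_n-D_{n_i}\vert \leq \sum_{k=n_i}^{n-1}\frac{1}{k+1}\leq \ln(n_{i+1}/n_i),$$
and since $n_{i+1}/n_i\to \gamma$ as $i\to\infty$, this forces $\limsup_n D_n\leq \limsup_i D_{n_i}+\ln\gamma=\ln\gamma$ on a full-measure event (which a priori depends on $\gamma$). Taking $\gamma$ along a countable sequence decreasing to $1$ and intersecting the corresponding full-measure events produces $\limsup_n D_n=0$ almost surely, which is the claimed convergence.

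The main obstacle is that Theorem \ref{thm2} by itself is not enough: the series $\sum_n \sqrt[3]{(n+\Delta(n))/n^2}$ need not converge, so a direct application of Borel-Cantelli to the full index set fails. Sparsifying to $n_i=\lfloor\gamma^i\rfloor$ is what makes the subsequence sum summable, but any such geometric sparsification introduces an irreducible $\ln\gamma$ error through the Lipschitz interpolation. This is precisely why the hypothesis is formulated to hold for \emph{every} $\gamma>1$: it allows the residual error $\ln\gamma$ to be sent to zero at the end via the countable intersection over $\gamma\downarrow 1$.
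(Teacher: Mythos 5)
Your proposal is correct and follows essentially the same route as the paper: sparsify to the geometric subsequence $\lfloor\gamma^i\rfloor$, combine Theorem \ref{thm2} with Markov's inequality to get a summable tail from the hypothesis, and use the fluctuation bound $\vert D_{n+1}-D_n\vert\leq 1/(n+1)$ to interpolate between subsequence indices. The only cosmetic difference is the order of quantifiers --- the paper fixes $\epsilon$ and takes $\gamma=1+\epsilon/2$ so the interpolation error is at most $\epsilon/2$, whereas you run Borel--Cantelli for each fixed $\gamma$ and send $\gamma\downarrow 1$ through a countable intersection at the end; both are valid.
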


\begin{proof}
We consider a sequence of indexes $j(i)=\lfloor \gamma^i \rfloor$ for a given $\gamma$. Then $\forall \; j(i)< k< j(i+1)$, we have

\begin{align*}
\vert D_k-D_{j(i)}\vert\leq \sum_{p=j(i)}^{k-1}\frac{1}{p+1}\leq \frac{\lfloor \gamma^{i+1} \rfloor-1-\lfloor \gamma^i \rfloor}{\lfloor \gamma^i \rfloor+1}<\frac{\gamma^{i+1}-\gamma^i}{\gamma^i}=\gamma-1.
\end{align*}
One should notice that $j(i)< k< j(i+1)$ implies $j(i+1)-j(i)\geq 2$ here. By taking $\gamma=1+\frac{\epsilon}{2}$, we have

\begin{equation*}
\bigcup_{n=m}^\infty \{D_n> \epsilon\}\subset\bigcup_{i=\widetilde{i}(m)}^\infty \{D_{j(i)}>\frac{\epsilon}{2}\},
\end{equation*}
where $\widetilde{i}(m)=\max\{i:j(i)\leq m \}$. Therefore, the probability in Eq.~\eqref{asconvergence} can be bounded as

\begin{equation*}
P\left(\bigcup_{n=m}^\infty \{D_n> \epsilon\}\right)\leq P\left(\bigcup_{i=\widetilde{i}(m)}^\infty \{D_{j(i)}>\frac{\epsilon}{2}\}\right)\leq \sum_{i=\widetilde{i}(m)}^\infty P\left(D_{j(i)}>\frac{\epsilon}{2}\right).
\end{equation*}
Since $\displaystyle \lim_{m\rightarrow \infty}\widetilde{i}(m)=\infty$, we only need $\displaystyle\sum_{i=1}^\infty P\left(D_{j(i)}>\frac{\epsilon}{2}\right) <+ \infty$. Thus we get a sufficient condition

\begin{equation}\label{condition1}
\sum_{i=1}^\infty P\left(D_{j(i)}>\frac{\epsilon}{2}\right) <+ \infty, \quad \forall \epsilon>0.
\end{equation}
Combine Eq.~\eqref{condition1} with Theorem \ref{fsb} and Markov inequality, we have 

\begin{equation*}
    \sum_{i=1}^\infty P\left(D_{j(i)}>\frac{\epsilon}{2}\right)\leq \frac{32}{\epsilon}\sum_{i=1}^\infty \sqrt[3]{\frac{\lfloor \gamma^i \rfloor+\Delta(\lfloor \gamma^i \rfloor)}{\lfloor \gamma^i \rfloor^2}}.
\end{equation*}
Notice that 

\begin{equation*}
\sum_{i=1}^\infty \sqrt[3]{\frac{\Delta(\lfloor \gamma^i \rfloor)}{\lfloor \gamma^i \rfloor^2}}<\sum_{i=1}^\infty \sqrt[3]{\frac{\lfloor \gamma^i \rfloor+\Delta(\lfloor \gamma^i \rfloor)}{\lfloor \gamma^i \rfloor^2}}<\sum_{i=1}^\infty \left(\sqrt[3]{\frac{1}{\lfloor \gamma^i \rfloor}}+\sqrt[3]{\frac{\Delta(\lfloor \gamma^i \rfloor)}{\lfloor \gamma^i \rfloor^2}}\right)\leq \frac{\sqrt[3]{2}}{\sqrt[3]{\gamma}-1} +\sum_{i=1}^\infty \sqrt[3]{\frac{\Delta(\lfloor \gamma^i \rfloor)}{\lfloor \gamma^i \rfloor^2}},
\end{equation*}
we have

\begin{equation*}
\sum_{i=1}^\infty \sqrt[3]{\frac{\Delta(\lfloor \gamma^i \rfloor)}{\lfloor \gamma^i \rfloor^2}}<+\infty\Longleftrightarrow\sum_{i=1}^\infty \sqrt[3]{\frac{\lfloor \gamma^i \rfloor+\Delta(\lfloor \gamma^i \rfloor)}{\lfloor \gamma^i \rfloor^2}}<+\infty, \quad \forall \gamma>1.
\end{equation*}
To sum up, we finally get the sufficient condition
\begin{equation*}
    \sum_{i=1}^\infty \sqrt[3]{\frac{\Delta(\lfloor \gamma^i \rfloor)}{\lfloor \gamma^i \rfloor^2}}<+\infty,\quad\forall \gamma>1.
\end{equation*}
\end{proof}

\begin{rthm}\label{thmas}
Consider a standardized Gaussian process $\{ X_k,k\in \mathbb{N}\}$. Assume that the dependence measure satisfies

\begin{equation*}
    \Delta(n)=O\left(n^2(\ln n)^{-3-\delta}\right)
\end{equation*}
for some $\delta>0$. Then we have 

\begin{equation*}
    \sup_t\vert\widehat{F}_n(t)-\mathbf{E}\widehat{F}_n(t) \vert\xrightarrow{a.s.} 0.
\end{equation*}

\end{rthm}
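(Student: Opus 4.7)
The plan is to deduce Theorem \ref{thmas} as a direct corollary of Lemma \ref{lemas}: it suffices to check that the polylogarithmic decay hypothesis $\Delta(n) = O\!\left(n^2 (\ln n)^{-3-\delta}\right)$ implies the sub-geometric summability condition $\sum_{i=1}^\infty \sqrt[3]{\Delta(\lfloor \gamma^i \rfloor)/\lfloor \gamma^i \rfloor^2} < +\infty$ for every $\gamma > 1$, at which point the almost sure convergence is immediate from that lemma.

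Fix $\gamma > 1$. For all $i$ sufficiently large (say $i \geq i_0(\gamma)$), one has $\lfloor \gamma^i \rfloor \geq \gamma^i/2$, and hence $\ln \lfloor \gamma^i \rfloor \geq (i/2)\ln \gamma > 0$, so the hypothesis yields
\begin{equation*}
\frac{\Delta(\lfloor \gamma^i \rfloor)}{\lfloor \gamma^i \rfloor^2} \leq \frac{C}{(\ln \lfloor \gamma^i \rfloor)^{3+\delta}} \leq \frac{C'}{i^{3+\delta}}
\end{equation*}
for constants $C, C' > 0$ depending only on $\gamma$ and $\delta$. Taking cube roots gives $\sqrt[3]{\Delta(\lfloor \gamma^i \rfloor)/\lfloor \gamma^i \rfloor^2} = O\!\left(i^{-1-\delta/3}\right)$, and because $\delta/3 > 0$ the tail series $\sum_{i \geq i_0} i^{-1-\delta/3}$ converges. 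The finitely many head terms with $i < i_0$ are each bounded by $1$ (using that unit variance forces $|C_{ij}| \leq 1$ by Cauchy--Schwarz, hence $\Delta(n) \leq n(n-1)$), so the full series is finite. Since $\gamma > 1$ was arbitrary, Lemma \ref{lemas} applies and delivers $\sup_t |\widehat{F}_n(t) - \mathbf{E}\widehat{F}_n(t)| \to 0$ almost surely.

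There is essentially no obstacle here; this is a clean reduction whose only subtlety is the need to restrict to the tail $i \geq i_0(\gamma)$ before invoking the estimate $\ln \lfloor \gamma^i \rfloor \asymp i$, since the implicit constant in the hypothesized $O(\cdot)$ bound is meaningful only once $n$ is large enough that $\ln n$ is comfortably positive. The finite initial segment of the series is harmless and is absorbed into the overall constant, so the exponent $3+\delta$ in the hypothesis is just enough—after losing a factor of $3$ from the cube root—to push the resulting series one step into the convergent regime $\sum i^{-1-\delta/3} < \infty$.
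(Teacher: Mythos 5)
Your proposal is correct and follows essentially the same route as the paper: reduce to the summability condition of Lemma \ref{lemas}, discard the finitely many indices where $\ln\lfloor\gamma^i\rfloor$ is not yet comparable to $i\ln\gamma$, and bound the tail by the convergent series $\sum_i i^{-1-\delta/3}$. The only cosmetic difference is that you justify the harmlessness of the head terms via the crude bound $\Delta(n)\le n(n-1)$, whereas the paper simply absorbs the first $\lfloor\log_\gamma 2\rfloor$ terms into the $O(\cdot)$ constant.
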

\begin{proof}
We only need to check that the condition in Lemma \ref{lemas} is satisfied. $\forall \gamma>1$, we have

\begin{align*}
\sum_{i=1}^\infty \sqrt[3]{\frac{\Delta(\lfloor \gamma^i \rfloor)}{\lfloor \gamma^i \rfloor^2}}&= O\left(\lfloor\log_{\gamma}2\rfloor+\sum_{i=\lfloor\log_{\gamma}2\rfloor+1}^\infty \sqrt[3]{\frac{\lfloor \gamma^i \rfloor^2\left(\ln \lfloor \gamma^i \rfloor\right)^{-3-\delta}}{\lfloor \gamma^i \rfloor^2}}\right)\\
&\leq O\left(\lfloor\log_{\gamma}2\rfloor+\sum_{i=1}^\infty\frac{1}{(i\ln \gamma)^{1+\frac{\delta}{3}}}\right)=O\left(\lfloor\log_{\gamma}2\rfloor+\frac{1}{(\ln\gamma)^{1+\frac{\delta}{3}}}\right)< +\infty.
\end{align*}

\end{proof}

\section{Discussion}
In this paper, we proved the convergence of empirical distribution defined by a Gaussian process by building finite sample bounds for multivariate Gaussian under general dependence. Loosely speaking, we can conclude that if randomness of the stochastic process originates from a Gaussian process whose correlation is not strong enough, the elements from the process will enjoy large number property. We think this conclusion will shed light on related fields regarding sequential randomness. \\

In addition, this paper demonstrated the framework to deal with empirical process under dependence structure. While previous work studied the dependence structure based on Markov property and martingale difference, it has been pointed out the multivariate Gaussian structure is sufficient to guarantee the convergence of empirical distribution. More broadly, while the property of multivariate Gaussian is interpreted by Hermite polynomials, we can deal with certain dependence for other distributions if there exists an appropriate Hilbert basis with respect to the bivariate dependence structure. What's more, if we adopt the framework of $L^2$ chaining and smoothing, we only have to study the bivariate dependence structure in each pair of random variables.

\section*{Acknowledgments}
The author would like to thank Jiantao Jiao at UC Berkeley for helpful discussions. The author would also like to thank Lihua Lei at Stanford University for reminding him of the result in \cite{Azriel2015The}, which implies the condition in Theorem \ref{thm1} is necessary and sufficient.\\

This paper serves as the undergraduate thesis of Jikai Hou at Peking University. Jikai Hou was supported by the elite undergraduate training program of School of Mathematical Sciences in Peking University.

\bibliographystyle{unsrt}
\bibliography{ref}

\end{document}